\documentclass[times]{nlaauth}

\usepackage{moreverb}
\usepackage{relsize}
\usepackage{amsthm}

\usepackage{subcaption}
\captionsetup{compatibility=false}

\newcommand{\matr}[1]{\mathlarger{\mathbf{#1}}}
\newcommand{\Ptimat}{\matr{\tilde{P}}}
\newcommand{\Phatmat}{\matr{\hat{P}}}
\newcommand{\dt}{\Delta t}
\newcommand{\dx}{\Delta x}

\newcommand\BibTeX{{\rmfamily B\kern-.05em \textsc{i\kern-.025em b}\kern-.08em
T\kern-.1667em\lower.7ex\hbox{E}\kern-.125emX}}

\newtheorem{lemma}{Lemma}
\newtheorem{theorem}{Theorem}
\newtheorem{remark}{Remark}

\graphicspath{{images/}}

\begin{document}

\title{Asymptotic convergence of the parallel full approximation scheme in space and time for linear problems}

\author{Matthias Bolten\affil{1}, Dieter Moser\affil{2}, Robert Speck\affil{2}\corrauth  }

\address{
\affilnum{1}Department of Mathematics, Universit\"at Kassel, Germany. \break
\affilnum{2} J\"ulich Supercomputing Centre, Forschungszentrum J\"ulich GmbH, Germany.}

\corraddr{E-mail: r.speck@fz-juelich.de}

\begin{abstract}
For time-dependent partial differential equations, parallel-in-time integration using the ``parallel full approximation scheme in space and time'' (PFASST) is a promising way to accelerate existing space-parallel approaches beyond their scaling limits. 
Inspired by the classical Parareal method and multigrid ideas, PFASST allows to integrate multiple time-steps simultaneously using a space-time hierarchy of spectral deferred correction sweeps. 
While many use cases and benchmarks exist, a solid and reliable mathematical foundation is still missing.
Very recently, however, PFASST for linear problems has been identified as a multigrid method.
in this paper, we will use this multigrid formulation and in particular PFASST's iteration matrix to show that in the non-stiff as well as in the stiff limit PFASST indeed is a convergent iterative method.
We will provide upper bounds for the spectral radius of the iteration matrix and investigate how PFASST performs for increasing numbers of parallel time-steps.
Finally, we will demonstrate that the results obtained here indeed relate to actual PFASST runs.
\end{abstract}

\keywords{parallel-in-time; PFASST; multigrid; asymptotic convergence; smoothing and approximation property; matrix permutation}

\maketitle

\section{Introduction}

With the advent of supercomputing architectures featuring millions of processing units,  classical parallelization techniques used to accelerate the solution of discretized partial differential equations face new challenges. 
For fixed-size problems, communication starts to dominate eventually, when only small portions of data are left for computation on each unit.
This ``trap of strong scaling'' leads to severe and inevitable upper limits for speedup obtainable with parallelization in space, leaving large parts of extreme scale supercomputers unexploited.
If weak scaling is the target, this may not be an issue, but for time-dependent problems stability considerations often lead to an increase in the number of time-steps as the problem is refined in space.
This is not mitigated by spatial parallelization alone, yielding the ``trap of weak scaling''.
Thus, the challenges arising from the extreme levels of parallelism required by today's and future high-performance computing systems mandates the development of new numerical methods that feature a maximum degree of concurrency.

\bigskip

For time-dependent problems, in particular for time-dependent partial differential equations, approaches for the parallelization along the temporal dimension have become increasingly popular over the last years.
In his seminal work in 2015 Gander lists over $25$ approaches to parallelize the seemingly serial process of time integration~\cite{Gander2015}. 
In particular, the invention of the Parareal method in 2001~\cite{LionsEtAl2001} alone sparked a multitude of new developments in this area. 
This ``parallelization across the step'' approach allows to integrate many time-steps simultaneously.
This can work on top of already existing parallelization strategies in space.
The idea is to derive a coarser, less expensive time-integration scheme for the problem at hand and use this so-called coarse propagator to quickly and serially propagate information forward in time.
The original integrator, in this context often called the fine propagator, is then used in parallel-in-time using the initial values the coarse scheme provided.
This cycle of fine and coarse, parallel and serial time-stepping is repeated and upon convergence, Parareal is as accurate as the fine propagator run in serial. 
This way, the costs of the expensive fine scheme are distributed, while the serial-in-time part is kept small using a cheap propagator.
This predictor-corrector approach, being easy to implement and easy to apply, has been analyzed extensively.
It has been identified as a multiple shooting method or as an FAS multigrid scheme~\cite{GanderVandewalle2007_SISC} and convergence has been proven under various conditions, see e.g.\ \cite{GanderVandewalle2007_SISC, GanderVandewalle2007, Gander2008, GanderHairer2008, StaffRonquist2005}.

\bigskip
Yet, a key drawback of Parareal is the severe upper bound on parallel efficiency.
If $K$ iterations are required for convergence, the efficiency is bounded by $1/K$.
Perfect linear speedup cannot be expected due to the serial coarse propagator, but efficiencies of a few percent are also not desirable.
Therefore, many researchers started to enhance the Parareal idea with the goal of loosening this harsh bound on parallel efficiency.
One idea is to replace the fine and the coarse propagators by iterative solvers and coupling their ``inner'' iteration with the ``outer'' Parareal iteration.
A first step in this direction was done in~\cite{Minion2010}, where spectral deferred correction methods (SDC, see~\cite{DuttEtAl2000}) were used within Parareal.
This led to the ``parallel full approximation scheme in space and time'' (PFASST), which augmented this approach by ideas from non-linear multigrid methods~\cite{EmmettMinion2012,EmmettMinion2014_DDM}.
In these original papers from 2012 and 2014, the PFASST algorithm was introduced, its implementation was discussed and it was applied to first problems.
In the following years, PFASST has been applied to more and more problems and coupled to different space-parallel solvers, ranging from a Barnes-Hut tree code to geometric multigrid, see~\cite{SpeckEtAl2012,SpeckEtAl2014_Parco,SpeckEtAl2014_DDM2012,MinionEtAl2015}.
Together with spatial parallelization, PFASST was demonstrated to run and scale on up to $458\mathord{,}752$ cores of an IBM Blue Gene/Q installation.
Yet, while applications, implementation and improvements are discussed frequently, a solid and reliable convergence theory is still missing.
While for Parareal many results exist and provide a profound basis for a deep understanding of this algorithm, this is by far not the case for PFASST.
Very recently, however, PFASST for linear problems was identified as a multigrid method in~\cite{doi:10.1002/nla.2110,Moser2017PhD} and the definition of its iteration matrix yielded a new understanding of the algorithm's components and their mechanics.
This understanding allows to analyze the method using the established Local Fourier Analysis (LFA) technique. 
LFA has been introduced to study smoothers in~\cite{BraMulti1977}, later it was extended to study the whole multigrid algorithm~\cite{STMultigrid1982} and since then it has become a standard tool for the analysis of multigrid. 
For a detailed introduction see~\cite{TOSMultigrid2001,WJPractical2005}. 
In the context of space-time multigrid the results obtained using plain LFA are less meaningful because of the non-normality due to the discretization of the time domain. 
To overcome this limitation the semi-algebraic mode analysis (SAMA) has been introduced in \cite{FMGeneralized2015}. 
A kindred idea has been used to analyze PFASST in~\cite{doi:10.1002/nla.2110,Moser2017PhD}.
Although this careful block Fourier mode analysis already revealed many interesting features and also limitations, a rigorous proof of convergence has not been provided so far.

\bigskip

In this paper, we will use the multigrid formulation of PFASST for linear problems and in particular the iteration matrix to show that in the non-stiff as well as in the stiff limit PFASST indeed is a convergent iterative method.
We will provide upper bounds for the spectral radius of the iteration matrix and show that under certain assumptions, PFASST also satisfies the approximation property of standard multigrid theory. 
In contrast, the smoothing property does not hold, but we will state a modified smoother which allows to satisfy also this property.
We will further investigate how PFASST performs for increasing numbers of parallel time-steps.
Finally, we will demonstrate that the results obtained here indeed relate to actual PFASST runs.
We start with a brief summary of the results found in~\cite{doi:10.1002/nla.2110}, describing PFASST as a multigrid method.

\section{A multigrid perspective on PFASST}

We focus on linear, autonomous systems of ordinary differential equations (ODEs) with
\begin{align}
  \begin{split}\label{eq:ode}
    u_t(t) &= \matr{S}u(t)\ \text{for}\ t\in[0,T],\\
    u(0) &= u_0
  \end{split}
\end{align}
with $u(t) \in \mathbb{C}^N$, $T>0$, initial value $u_0\in\mathbb{C}^N$ and ``spatial'' matrix $\matr{S}\in\mathbb{C}^{N\times N}$, stemming from e.g.\ a spatial discretization of a partial differential equation (PDE).
Examples include the heat or the advection equation, but also the wave equation and other types of linear PDEs and ODEs.

\subsection{The collocation problem and SDC}

We decompose the time interval into $L$ subintervals $[t_l,t_{l+1}]$, $l=0, ..., L-1$ and rewrite the ODE for such a time-step in Picard formulation as
\begin{align*}
  u(t) = u_l + \int_{t_l}^t \matr{S}u(s)ds,\ t\in[t_l,t_{l+1}],
\end{align*}
where $u_l$ is the initial condition for this time-step, e.g.\ coming from a time-stepping scheme.
Introducing $M$ quadrature nodes $\tau_1,...,\tau_M$ with $t_l \le \tau_1 < ... < \tau_M = t_{l+1}$, we can approximate the integrals from $t_l$ to these nodes $\tau_m$ using spectral quadrature like Gau\ss-Radau or Gau\ss-Lobatto quadrature, such that
\begin{align*}
  u_m = u_l + \dt\sum_{j=1}^M q_{m,j} \matr{S}u_j \approx u_l + \int_{t_l}^{\tau_m} \matr{S}u(s)ds,\ \text{for}\ m=1,...,M,
\end{align*}
where $u_m \approx u(\tau_m)$, $\dt = t_{l+1}-t_l$ and $q_{m,j}$ represent the quadrature weights for the interval $[t_l,\tau_m]$ with
\begin{equation*}
	q_{m,j} := \int_{t_l}^{\tau_m} \ell_j(s)ds, \quad m,j=1,\ldots,M,
\end{equation*}
where $\ell_j$ are the Lagrange polynomials to the points $\tau_m$.
Note that for the quadrature rule on each subinterval $[\tau_m,\tau_{m+1}]$, $m=1, ..., M-1$ all collocation nodes are taken into account, even if they do not belong to the subinterval under consideration.
Combining this into one set of linear equations yields
\begin{align}\label{eq:coll_orig}
  U = U_l + \dt \left(\matr{Q}\otimes\matr{S}\right) U\quad \text{or}\quad \left(\matr{I}_{MN} - \dt\matr{Q}\otimes\matr{S}\right)U = U_l
\end{align}
for vectors $U = (u_1, ..., u_M)^T, U_l = (u_l, ..., u_l)^T\in\mathbb{C}^{MN}$ and quadrature matrix $\matr{Q} = (q_{m,j})\in\mathbb{R}^{M\times M}$.
This is the so-called ``collocation problem'' and it is equivalent to a fully implicit Runge-Kutta method.
Before we proceed with describing the solution strategy for this problem, we slightly change the notation:
Instead of working with the term $\dt\matr{Q}\otimes\matr{S}$, we introduce the ``CFL number'' $\mu$ (sometimes called the ``discrete dispersion relation number'') to absorb the time-step size $\dt$, problem-specific parameters like diffusion coefficients as well as the spatial mesh size $\dx$, if applicable.
We write
\begin{align*}
  \dt\matr{S} = \mu\matr{A},
\end{align*}
where the matrix $\matr{A}$ is the normalized description of the spatial problem or system of ODEs.
For example, for the heat and the advection equation, the parameter $\mu$ is defined by 
\begin{align*}
  \mu_\mathrm{diff} = \nu\frac{\dt}{\dx^2},\quad \mu_\mathrm{adv} = c\frac{\dt}{\dx}
\end{align*}
with diffusion coefficient $\nu$ and advection speed $c$.
Then, Equation~\eqref{eq:coll_orig} reads
\begin{align}\label{eq:coll}
  \left(\matr{I}_{MN} - \mu\matr{Q}\otimes\matr{A}\right)U = U_l
\end{align}
and we will use this form for the remainder of this paper.

\bigskip

This system of equations is dense and a direct solution is not advisable, in particular if the right-hand side of the ODE is non-linear.
While the standard way of solving this is a simplified Newton approach~\cite{opac-b1130632}, the more recent development of spectral deferred correction methods (SDC, see~\cite{DuttEtAl2000}) provides an interesting and very flexible alternative.
In order to present this approach, we follow the idea of preconditioned Picard iteration as found e.g.\ in~\cite{HuangEtAl2006, Weiser2014, RuprechtSpeck2016}.
The key idea here is to provide a flexible preconditioner based on a simpler quadrature rule for the integrals.
More precisely, the iteration $k$ is given by
\begin{align*}
  U^{k+1} = U^{k} + \left(\matr{I}_{MN} - \mu\matr{Q}_\Delta\otimes\matr{A}\right)^{-1}\left(U_l - \left(\matr{I}_{MN} - \mu\matr{Q}\otimes\matr{A}\right)U^k\right),\ \text{for}\ k = 0, ... K,
\end{align*}
with $K\in\mathbb{N}$ and where the matrix $\matr{Q}_\Delta\in\mathbb{R}^{M\times M}$ gathers the weights of this simpler quadrature rule.
Examples are the implicit right-hand side rule or the explicit left-hand side rule, both yielding lower triangular matrices, which make the inversion of the preconditioner straightforward using simple forward substitution.
More recently, Weiser~\cite{Weiser2014} defined $\matr{Q}_\Delta = \matr{U^T}$ for $\matr{Q}^T = \matr{L}\matr{U}$ and showed superior convergence properties of SDC for stiff problems. 
This approach has become the de-facto standard for SDC preconditioning and is colloquially known as St.~Martin's or LU trick.
Now, for each time-step, SDC can be used to generate an approximate solution of the collocation problem~\eqref{eq:coll}. 
As soon as SDC has converged (e.g.\ the residual of the collocation problem is smaller than a prescribed threshold), the solution at $\tau_M$ is used as initial condition for the next time-step.
In order to parallelize this, the ``parallel full approximation scheme in space and time'' (PFASST, see~\cite{EmmettMinion2012}) makes use of a space-time hierarchy of SDC iterations (called ``sweeps'' in this context), using the coarsest level to propagate information quickly forward in time. This way, multiple time-steps can be integrated simultaneously, where on each local time interval SDC sweeps are used to approximate the collocation problem.
We follow~\cite{doi:10.1002/nla.2110,Moser2017PhD} to describe the PFASST algorithm more formally.

\subsection{The composite collocation problem and PFASST}

The problem PFASST is trying to solve is the so called ``composite collocation problem'' for $L\in\mathbb{N}$ time-steps with
\begin{align*}
  \begin{pmatrix}
    \matr{I}_{MN} - \mu\matr{Q}\otimes\matr{A} \\
    -\matr{H} & \matr{I}_{MN} - \mu\matr{Q}\otimes\matr{A} \\
     & \ddots & \ddots \\
    & & -\matr{H} & \matr{I}_{MN} - \mu\matr{Q}\otimes\matr{A}
  \end{pmatrix}
  \begin{pmatrix}
    U_1\\
    U_2\\
    \vdots\\
    U_L
  \end{pmatrix} = 
  \begin{pmatrix}
    U_0\\
    0\\
    \vdots\\
    0
  \end{pmatrix}.
\end{align*}
The system matrix consists of $L$ collocation problems on the block diagonal and a transfer matrix $\matr{H} = \matr{N}\otimes\matr{I}_N\in\mathbb{R}^{MN\times MN}$ on the lower diagonal, which takes the last value of each time-step and makes it available as initial condition for the next one.
With the nodes we choose here, $\matr{N}$ is simply given by
\begin{align*}
  \matr{N} =  
    \begin{pmatrix}
      0 & 0 & \cdots & 1 \\
      0 & 0 & \cdots & 1 \\
       \vdots & \vdots & & \vdots \\
      0 & 0 & \cdots & 1 \\
    \end{pmatrix} \in \mathbb{R}^{M\times M}.
\end{align*}
For collocation nodes with $\tau_M < t_{l+1}$, i.e. with the last node $\tau_M$ not being identical with the subinterval boundary point $t_{l+1}$, the matrix $\matr{N}$ would contain an extrapolation rule for the solution value at $t_{l+1}$ in each line.
Note that instead of extrapolation the collocation formulation could be used as well.
More compactly and more conveniently, the composite collocation problem can be written as
\begin{align*}
  \matr{C}\vec{U} = \vec{U}_0
\end{align*}
with space-time-collocation vectors $\vec{U} = (U_1, ..., U_L)^T, \vec{U}_0 = (U_0, 0, ..., 0)^T\in\mathbb{R}^{LMN}$ and system matrix $\matr{C} = \matr{I}_{LMN} - \mu \matr{I}_L\otimes\matr{Q} \otimes \matr{A} - \matr{E}\otimes\matr{H}\in\mathbb{C}^{LMN\times LMN}$, where the matrix $\matr{E}\in\mathbb{R}^{L\times L}$ simply has ones on the first lower subdiagonal and zeros elsewhere.

\bigskip

The key idea of PFASST is to solve the composite collocation problem using a multigrid scheme.
If the right-hand side of the ODE~\eqref{eq:ode} is non-linear, a non-linear FAS multigrid is used.
Although our analysis is focused on linear problems, we use FAS terminology to formulate PFASST to remain consistent with the literature.
Also, we limit ourselves to a two-level scheme in order to keep the notation as simple as possible. 
Three components are needed to describe the multigrid scheme used to solve the composite collocation problem: (1) a smoother on the fine level, (2) a solver on the coarse level and (3) level transfer operators.
In order to obtain parallelism, the smoother we choose is an approximative block Jacobi smoother, where the entries on the lower subdiagonal are omitted.
The idea is to use SDC within each time-step (this is why it is an ``approximative'' Jacobi iteration), but omit the transfer matrices $\matr{H}$ on the lower diagonal.
In detail, the smoother is defined by the preconditioner
\begin{align*}
  \Phatmat = 
  \begin{pmatrix}
    \matr{I}_{MN} - \mu\matr{Q}_\Delta\otimes\matr{A} \\
     & \matr{I}_{MN} - \mu\matr{Q}_\Delta\otimes\matr{A} \\
     & & \ddots \\
    & & & \matr{I}_{MN} - \mu\matr{Q}_\Delta\otimes\matr{A}
  \end{pmatrix}
\end{align*}
or, more compactly, $\Phatmat = \matr{I}_{LMN} - \mu \matr{I}_L\otimes\matr{Q}_\Delta \otimes \matr{A}$.
Inversion of this matrix can be done on all $L$ time-steps simultaneously, leading to $L$ decoupled SDC sweeps.
Note that typically this is done only once or twice on the fine level.
In contrast, the solver on the coarse level is given by an approximative block Gau\ss-Seidel preconditioner.
Here, SDC is used for each time-step, but the transfer matrix $\matr{H}$ is included.
This yields for the preconditioner
\begin{align*}
  \Ptimat = 
  \begin{pmatrix}
    \matr{I}_{MN} - \mu\matr{Q}_\Delta\otimes\matr{A} \\
    -\matr{H} & \matr{I}_{MN} - \mu\matr{Q}_\Delta\otimes\matr{A} \\
     & \ddots & \ddots \\
    & & -\matr{H} & \matr{I}_{MN} - \mu\matr{Q}_\Delta\otimes\matr{A}
  \end{pmatrix}
\end{align*}
or $\Ptimat = \matr{I}_{LMN} - \mu\matr{I}_L\otimes \matr{Q}_\Delta \otimes \matr{A} - \matr{E}\otimes\matr{H}$.
Inversion of this preconditioner is inherently serial, but the goal is to keep this serial part as small as possible by applying it on the coarse level only, just as the Parareal method does.
Thus, we need coarsening strategies in place to reduce the costs on the coarser levels~\cite{Speck2015}.
To this end, we introduce block-wise restriction and interpolation $\matr{T}_F^C$ and $\matr{T}_C^F$, which coarsen the problem in space and reduce the number of quadrature nodes but do not coarsen in time, i.e., the number of time steps is not reduced.
We note that the latter is also possible in this formal notation, but so far no PFASST implementation is working with this.
Also, the theory presented here makes indeed use of the fact that coarsening across time-steps is not applied.
A first discussion on this topic can be found in~\cite{Moser2017PhD}.
Let $\tilde{N}\in\mathbb{N}$ be the number of degrees of freedom on the coarse level and $\tilde{M}\in\mathbb{N}$ the number of collocation nodes on the coarse level. 
Restriction and interpolation operators are then given by
\begin{align*}
  \matr{T}_F^C &= \matr{I}_L\otimes \matr{T}_{F,Q}^C\otimes\matr{T}_{F,A}^C\in\mathbb{R}^{L\tilde{M}\tilde{N}\times LMN},\\
  \matr{T}_C^F &= \matr{I}_L\otimes \matr{T}_{C,Q}^F\otimes\matr{T}_{C,A}^F\in\mathbb{R}^{LMN\times L\tilde{M}\tilde{N}},
\end{align*}
where the matrices $\matr{T}_{F,Q}^C$ and $\matr{T}_{C,Q}^F$ represent restriction and interpolation on the quadrature nodes, while $\matr{T}_{F,A}^C$ and $\matr{T}_{C,A}^F$ operate on spatial degrees-of-freedom.
Within PFASST, these operators are typically standard Lagrangian-based restriction and interpolation.
We use the tilde symbol to denote matrices on the coarse level, so that the approximative block Gau\ss-Seidel preconditioner is actually given by
\begin{align*}
  \Ptimat = \matr{I}_{L\tilde{M}\tilde{N}} - \mu\matr{I}_L\otimes \matr{\tilde{Q}}_\Delta \otimes \matr{\tilde{A}} - \matr{E}\otimes\matr{\tilde{H}}\in\mathbb{R}^{L\tilde{M}\tilde{N}\times L\tilde{M}\tilde{N}},
\end{align*}
where $\matr{\tilde{H}} = \matr{\tilde{N}}\otimes\matr{I}_{\tilde{N}}\in\mathbb{R}^{\tilde{M}\tilde{N}\times\tilde{M}\tilde{N}}$.
Note that this preconditioner is typically applied only once or twice on the coarse level, too.
In addition, the composite collocation problem has to be modified on the coarse level. 
This is done by the $\tau$-correction of the FAS scheme and we refer to~\cite{doi:10.1002/nla.2110,Moser2017PhD} for details on this, since the actual formulation does not matter here.
We now have all ingredients for one iteration of the two-level version of PFASST using post-smoothing: 
\begin{enumerate}  
  \item restriction to the coarse level including the formulation of the $\tau$-correction,
  \item serial approximative block Gau\ss-Seidel iteration on the modified composite collocation problem on the coarse level,
  \item coarse-grid correction of the fine-level values,
  \item smoothing of the composite collocation problem on the fine level using parallel approximative block Jacobi iteration.
\end{enumerate}
Thus, one iteration of PFASST can simply be written as
\begin{align*}
  \vec{U}^{k+1/2} &= \vec{U}^{k} + \matr{T}_C^F\Ptimat^{-1}\matr{T}_F^C\left(\vec{U}_0 - \matr{C}\vec{U}^{k}\right),\\
  \vec{U}^{k+1} &= \vec{U}^{k+1/2} + \Phatmat^{-1}\left(\vec{U}_0 - \matr{C}\vec{U}^{k+1/2}\right).
\end{align*}
Beside this rather convenient and compact form, this formulation has the great advantage of providing the iteration matrix of PFASST, which paves the way to a comprehensive analysis.

\subsection{Overview and notation}

In the following, we summarize the results described above and state the iteration matrix of PFASST.

\begin{theorem}
  Let $\matr{T}_F^C$ and $\matr{T}_C^F$ be block-wise defined transfer operators, which treat the subintervals $[t_l,t_{l+1}]$, $l=0, ..., L-1$ independently from each other (i.e.\ which do not coarsen or refine across subinterval boundaries).
  For a CFL number $\mu>0$ we define the composite collocation problem as
  \begin{align*}
     \matr{C} = \matr{I}_{LMN} - \mu \matr{I}_L\otimes\matr{Q} \otimes \matr{A} - \matr{E}\otimes\matr{H},
  \end{align*}
  with collocation matrix $\matr{Q}$, spatial matrix $\matr{A}$ (for more details see~\eqref{eq:coll}) and 
  \begin{align*}
      \matr{H}=\matr{N} \otimes \matr{I}_{N}\in \mathbb{R}^{NM}\quad\text{with}\quad\matr{N} &=  
        \begin{pmatrix}
          0 & 0 & \cdots & 1 \\
          0 & 0 & \cdots & 1 \\
           \vdots & \vdots & & \vdots \\
          0 & 0 & \cdots & 1 \\
        \end{pmatrix} \in \mathbb{R}^{M\times M}
  \end{align*}
  and $\matr{E} \in \mathbb{R}^{L\times L}$ being a matrix which has ones on the first subdiagonal and zeros elsewhere.
  We further define by $\matr{\hat{P}}$ the approximative block Jacobi preconditioner on the fine level and by $\matr{\tilde{P}}$ the approximative block Gauss-Seidel preconditioner on the coarse level (the tilde symbol always indicates the coarse level operators), i.e.
  \begin{align*}
    \Ptimat &= \matr{I}_{L\tilde{M}\tilde{N}} - \mu\matr{I}_L\otimes \matr{\tilde{Q}}_\Delta \otimes \matr{\tilde{A}} - \matr{E}\otimes\matr{\tilde{H}},\\
    \Phatmat &= \matr{I}_{LMN} - \mu \matr{I}_L\otimes\matr{Q}_\Delta \otimes \matr{A} 
  \end{align*}
  where $\matr{Q}_\Delta$ corresponds to a simple quadrature rule.
  For given $\matr{H}$ and $\matr{\tilde{H}}$ we require that the restriction operator $\matr{T}_F^C$ satisfies $(\matr{E}\otimes\matr{\tilde{H}})\matr{T}_F^C  = \matr{T}_F^C (\matr{E}\otimes\matr{H})$.
  Then, the PFASST iteration matrix is given by the product of the smoother's and the coarse-grid correction's iteration matrix with
  \begin{align*}
    \matr{T}_{\mathrm{PFASST}}  = \matr{T}_{\mathrm{S}}\matr{T}_{\mathrm{CGC}}=\left( \matr{I}_{LMN} - \Phatmat^{-1}\matr{C} \right) \left( \matr{I}_{LMN} - \matr{T}_C^F \Ptimat^{-1} \matr{T}_F^C \matr{C}\right).
  \end{align*}
  We note that we use the same CFL number $\mu$ for both coarse and fine level and absorb constant factors between the actual CFL numbers of the coarse and the fine problem into the operators $\matr{A}$ and $\matr{\tilde{A}}$.
  \label{th:pfasst_in_matrix_form}
\end{theorem}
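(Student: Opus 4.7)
The plan is a standard error-propagation argument: take the two update formulas listed just before the theorem as given, subtract them from the exact solution of the composite collocation problem, and read off the linear operators acting on the error in each substep. The product of these operators is then the claimed iteration matrix.

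More concretely, let $\vec{U}^\star$ denote the exact solution of $\matr{C}\vec{U} = \vec{U}_0$ and define the error at iteration $k$ by $\vec{e}^k := \vec{U}^\star - \vec{U}^k$. For the coarse-grid correction step, I would substitute $\vec{U}_0 = \matr{C}\vec{U}^\star$ in
\begin{align*}
   \vec{U}^{k+1/2} = \vec{U}^{k} + \matr{T}_C^F\Ptimat^{-1}\matr{T}_F^C\bigl(\vec{U}_0 - \matr{C}\vec{U}^{k}\bigr),
\end{align*}
subtract both sides from $\vec{U}^\star$, and use linearity of $\matr{C}$ to obtain $\vec{e}^{k+1/2} = \bigl(\matr{I}_{LMN} - \matr{T}_C^F\Ptimat^{-1}\matr{T}_F^C\matr{C}\bigr)\vec{e}^k$, which is exactly $\matr{T}_{\mathrm{CGC}}\vec{e}^k$. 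The same manipulation applied to the subsequent Jacobi smoothing step yields $\vec{e}^{k+1} = \bigl(\matr{I}_{LMN} - \Phatmat^{-1}\matr{C}\bigr)\vec{e}^{k+1/2} = \matr{T}_{\mathrm{S}}\vec{e}^{k+1/2}$, and composition gives $\vec{e}^{k+1} = \matr{T}_{\mathrm{S}}\matr{T}_{\mathrm{CGC}}\vec{e}^k$.

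Before concluding, I would justify the CGC update formula itself, since in the text it is introduced together with an FAS $\tau$-correction rather than as a bare two-level linear correction. For linear $\matr{C}$ the FAS correction telescopes: writing the coarse problem as $\matr{C}_{\mathrm{coarse}}\tilde{\vec{U}} = \matr{T}_F^C\matr{C}\vec{U}^k + \matr{T}_F^C\bigl(\vec{U}_0 - \matr{C}\vec{U}^k\bigr)$ and doing one sweep with $\Ptimat$ starting from $\tilde{\vec{U}}^0 = \matr{T}_F^C\vec{U}^k$ produces an increment $\Ptimat^{-1}\matr{T}_F^C(\vec{U}_0 - \matr{C}\vec{U}^k)$ that is prolonged by $\matr{T}_C^F$, matching the displayed update. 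It is here that the commutativity hypothesis $(\matr{E}\otimes\matr{\tilde{H}})\matr{T}_F^C = \matr{T}_F^C(\matr{E}\otimes\matr{H})$ enters: it guarantees that the off-diagonal block that carries information between time steps restricts consistently, so that $\matr{C}_{\mathrm{coarse}}$ has the block Gauss--Seidel structure assumed in $\Ptimat$ and the $\tau$-correction cancels algebraically in the linear case.

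I do not expect a significant obstacle; the content is bookkeeping rather than estimation. The only point that requires care, and which I would single out as the most delicate step, is the clean reduction of the FAS $\tau$-correction to the stated linear update, because the text deliberately leaves the concrete form of the $\tau$-correction aside. Showing that, under the given commutativity requirement on $\matr{T}_F^C$, the $\tau$-correction drops out exactly and reproduces the two displayed update rules is the crucial algebraic check; once this is in place, the remainder is immediate error-propagation.
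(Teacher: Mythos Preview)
Your proposal is correct and actually more substantive than what the paper provides: the paper's ``proof'' of this theorem is nothing more than a citation to~\cite{doi:10.1002/nla.2110,Moser2017PhD}, stating that the derivation can be found there. Your error-propagation argument---reading off $\matr{T}_{\mathrm{CGC}}$ and $\matr{T}_{\mathrm{S}}$ from the two update formulas and composing them---is exactly the standard route and is presumably what the cited references contain. The care you take with the FAS $\tau$-correction reduction and the role of the commutativity hypothesis $(\matr{E}\otimes\matr{\tilde{H}})\matr{T}_F^C = \matr{T}_F^C(\matr{E}\otimes\matr{H})$ is appropriate and goes beyond anything stated explicitly in this paper.
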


\begin{proof}
  This is taken from~\cite{doi:10.1002/nla.2110,Moser2017PhD} and a much more detailed derivation and discussion can be found there.
\end{proof}


Note that we assume here and in the following that the inverses of $\Phatmat$ and $\Ptimat$ both exist.
This corresponds to the fact that time-stepping via $\matr{Q}_\Delta$ is possible on each subinterval.

In what follows, we are interested in the behavior of PFASST's iteration matrix for the non-stiff as well as for the stiff limit. 
More precisely, we will look at the case where the CFL number $\mu$ either goes to zero or to infinity.
While the first case represents the analysis for smaller and smaller time-steps, the second one covers scenarios where e.g. the mesh- or element-size $\dx$ goes to zero.
Alternatively, problem parameters like the diffusion coefficient or the advection speed could become very small or very large, while $\dt$ and $\dx$ are fixed.

\section{The non-stiff limit}\label{sec:mu_to_0}

This section is split into three parts.
We look at the iteration matrices of the smoother and the coarse-grid correction separately and then analyze the full iteration matrix of PFASST.
While for the smoother we introduce the main idea behind the asymptotic convergence analysis, the analysis of the coarse-grid correction is dominated by the restriction and interpolation operators.
For PFASST's full iteration matrix we then combine both results in a straightforward manner.

\subsection{The smoother}

We first consider the iteration matrix $\matr{T}_{\mathrm{S}}$ of the smoother with
\begin{align*}
  \matr{T}_{\mathrm{S}} &= \matr{I}_{LMN} - \Phatmat^{-1}\matr{C}\\
   &= \matr{I}_{LMN} - \left(\matr{I}_{LMN} - \mu \matr{I}_L\otimes\matr{Q}_\Delta \otimes \matr{A} \right)^{-1}\left(\matr{I}_{LMN} - \mu \matr{I}_L\otimes\matr{Q} \otimes \matr{A} - \matr{E}\otimes\matr{H}\right).
\end{align*}
We write $\matr{T}_{\mathrm{S}} = \matr{T}_{\mathrm{S}}(\mu)$, so that
\begin{align*}
    \matr{T}_{\mathrm{S}}(0) = \matr{E}\otimes\matr{H}.
\end{align*}
Therefore, we have
\begin{align}
  \begin{split}\label{eq:diff_smoo}
    \matr{T}_{\mathrm{S}}(\mu) - \matr{T}_{\mathrm{S}}(0) &= \Phatmat^{-1}\left(\Phatmat-\matr{C} - \Phatmat\left(\matr{E}\otimes\matr{H}\right)\right)\\
    &= \mu\Phatmat^{-1}\left(\matr{I}_L\otimes\left(\matr{Q}-\matr{Q}_\Delta\right)\otimes\matr{A} +\matr{E}\otimes\matr{Q}_\Delta\matr{N}\otimes\matr{A}\right).
  \end{split}
\end{align}
Moreover, if $\mu$ is smaller than a given value $\mu^*_\mathrm{S}>0$, the norm of $\Phatmat^{-1}$ can be bounded by 
\begin{align}\label{eq:smoo_mu_bound_to0}
    \left\lVert\Phatmat^{-1}\right\rVert  = \left\lVert\Phatmat^{-1}(\mu)\right\rVert = \left\lVert\left(\matr{I}_{LMN} - \mu \matr{I}_L\otimes\matr{Q}_\Delta \otimes \matr{A} \right)^{-1}\right\rVert \le c_1(\mu^*_{\mathrm{S},0}) = c_1,
\end{align}
for a constant $c_1(\mu^*_{\mathrm{S},0})$ independent of $\mu$, since the function $\mu\mapsto\left\lVert\Phatmat^{-1}(\mu)\right\rVert$ is continuous on the closed interval $[0, \mu^*_{\mathrm{S},0}]$.
The norm $\left\lVert.\right\rVert$ can be any induced matrix norm unless stated otherwise.
Together with~\eqref{eq:diff_smoo} we obtain
\begin{align*}
    \left\lVert\matr{T}_{\mathrm{S}}(\mu) - \matr{T}_{\mathrm{S}}(0)\right\rVert \le c_2\mu
\end{align*}
since the last factor of~\eqref{eq:diff_smoo} does not depend on $\mu$.

Therefore, the matrix $\matr{T}_{\mathrm{S}}(\mu)$ converges to $\matr{T}_{\mathrm{S}}(0) = \matr{E} \otimes \matr{H}$ linearly as $\mu\rightarrow 0$, so that we can write
\begin{align}\label{eq:smoo_perturbed}
    \matr{T}_{\mathrm{S}}(\mu) = \matr{T}_{\mathrm{S}}(0)+ \mathcal{O}(\mu) =\matr{E} \otimes \matr{H} + \mathcal{O}(\mu).
\end{align}
This leads us to the following lemma:

\begin{lemma}\label{lem:smoother_to0}
    The smoother of PFASST converges for linear problems and Gau\ss-Radau nodes, if the CFL number $\mu$ is small enough. 
    More precisely, for $L$ time-steps the spectral radius of the smoother is bounded by
    \begin{align}\label{eq:smoother_est}
        \rho\left(\matr{T}_{\mathrm{S}}(\mu)\right) \le c\mu^\frac{1}{L}
    \end{align} 
    for a constant $c>0$ independent of $\mu$, if $\mu < \mu^*_{\mathrm{S},0}$ for a fixed value $\mu^*_{\mathrm{S},0} > 0$.
\end{lemma}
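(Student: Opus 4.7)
The plan is to combine two observations that the preceding discussion has essentially handed us for free. First, equation~\eqref{eq:diff_smoo} gives the splitting
$$\matr{T}_{\mathrm{S}}(\mu) = \matr{E}\otimes\matr{H} + \mu\matr{R}(\mu),\qquad \matr{R}(\mu) := \Phatmat^{-1}(\mu)\bigl(\matr{I}_L\otimes(\matr{Q}-\matr{Q}_\Delta)\otimes\matr{A} + \matr{E}\otimes\matr{Q}_\Delta\matr{N}\otimes\matr{A}\bigr),$$
and \eqref{eq:smoo_mu_bound_to0} together with the fact that the second factor of $\matr{R}(\mu)$ does not depend on $\mu$ shows that $\|\matr{R}(\mu)\|$ is uniformly bounded by some constant $c_2$ on $[0,\mu^*_{\mathrm{S},0}]$. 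Second, the unperturbed matrix is nilpotent of index (at most) $L$: since $\matr{E}\in\mathbb{R}^{L\times L}$ is the strict first subdiagonal shift, $\matr{E}^L=0$, and hence $(\matr{E}\otimes\matr{H})^L = \matr{E}^L\otimes\matr{H}^L = 0$. The Gau\ss-Radau hypothesis enters at exactly this point: it fixes $\matr{N}$ (and thus $\matr{H}$) in the simple form written above so that the $\mu\to 0$ limit of $\matr{T}_{\mathrm{S}}$ really is $\matr{E}\otimes\matr{H}$ and contains no additional extrapolation contribution.

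The core step is then to show $\|\matr{T}_{\mathrm{S}}(\mu)^L\| = \mathcal{O}(\mu)$. To see this I would expand the $L$-fold product
$$\matr{T}_{\mathrm{S}}(\mu)^L = \bigl(\matr{E}\otimes\matr{H} + \mu\matr{R}(\mu)\bigr)^L$$
as a non-commutative sum of $2^L$ terms. The unique term using $\matr{E}\otimes\matr{H}$ in every slot is $(\matr{E}\otimes\matr{H})^L=0$. Each of the remaining $2^L-1$ terms contains at least one factor of $\mu\matr{R}(\mu)$ and is therefore bounded in norm by a constant multiple of $\mu$, where the constant depends only on $L$, $\|\matr{E}\otimes\matr{H}\|$ and $c_2$ but not on $\mu$. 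Adding the estimates yields $\|\matr{T}_{\mathrm{S}}(\mu)^L\|\le \tilde c\,\mu$ for a $\tilde c>0$ that is uniform on $[0,\mu^*_{\mathrm{S},0}]$.

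The conclusion follows from the standard spectral-radius inequality $\rho(\matr{M}) \le \|\matr{M}^n\|^{1/n}$, valid for every induced matrix norm and every positive integer $n$, applied with $n=L$:
$$\rho(\matr{T}_{\mathrm{S}}(\mu)) \le \|\matr{T}_{\mathrm{S}}(\mu)^L\|^{1/L} \le \tilde c^{\,1/L}\mu^{1/L} = c\,\mu^{1/L},$$
which is precisely~\eqref{eq:smoother_est}.

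There is no real analytic obstacle here; the whole proof is driven by the nilpotency of the shift $\matr{E}$ in conjunction with the $\mathcal{O}(\mu)$ perturbation bound. The only piece of bookkeeping that deserves care is that the constant absorbing the $2^L-1$ cross terms is genuinely independent of $\mu$, which is ensured by the uniform bound on $\|\matr{R}(\mu)\|$ over the closed interval $[0,\mu^*_{\mathrm{S},0}]$. In particular, the exponent $1/L$ is optimal in this argument since it is the nilpotency index of $\matr{E}\otimes\matr{H}$ that governs how strongly the perturbation can lift the spectrum away from zero.
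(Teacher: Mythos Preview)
Your argument is correct, but it is not the route the paper takes. The paper proves the lemma by eigenvalue perturbation theory: it identifies the Jordan structure of $\matr{T}_{\mathrm{S}}(0)=\matr{E}\otimes\matr{H}$ (all eigenvalues zero, largest Jordan block of size $L$) and then invokes Wilkinson's classical result that an $\mathcal{O}(\mu)$ perturbation of a matrix moves the eigenvalues by at most $\mathcal{O}(\mu^{1/\alpha})$, where $\alpha$ is the size of the largest Jordan block. You instead bypass the Jordan analysis entirely: from nilpotency $(\matr{E}\otimes\matr{H})^L=0$ and the uniform bound on $\matr{R}(\mu)$ you get $\|\matr{T}_{\mathrm{S}}(\mu)^L\|\le\tilde c\,\mu$ directly, and then apply the elementary inequality $\rho(\matr{M})=\rho(\matr{M}^L)^{1/L}\le\|\matr{M}^L\|^{1/L}$. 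This is more self-contained and makes the origin of the exponent $1/L$ transparent without appealing to an external perturbation theorem. Interestingly, the paper essentially carries out your $L$-th-power expansion in Remark~\ref{rem:smoo_norm_bound} (to bound $\|\matr{T}_{\mathrm{S}}(\mu)^k\|$ for $k\ge L$) but does not exploit it for the spectral-radius bound; your proof closes precisely that loop. The trade-off is that the paper's Jordan-block viewpoint explains \emph{why} $1/L$ is in general sharp for a generic perturbation, whereas your argument only shows it is an upper bound.
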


\begin{proof}
    The matrix $\matr{T}_{\mathrm{S}}(\mu)$ can be seen as a perturbation of $\matr{T}_{\mathrm{S}}(0)$, where the perturbation matrix $\matr{D}(\mu)$ is of the order of $\mathcal{O}(\mu)$.
    The eigenvalues of the unperturbed matrix $\matr{T}_{\mathrm{S}}(0)$ are all zero (since it only has entries strictly below the diagonal).
    With
    \begin{align*}
        \matr{T}_{\mathrm{S}}(0) = \matr{E} \otimes \matr{H} = \matr{E} \otimes \matr{N} \otimes \matr{I},
    \end{align*}
    its Jordan canonical form also consists for three parts.
    Obviously, the canonical form of $\matr{E}$ consists of a single Jordan block of size $L$ for the eigenvalue $0$, while $\matr{I}$ has $N$ Jordan blocks of size $1$, with $N$ being the number of degrees-of-freedom in space.
    For $M$ quadrature nodes, the canonical form of $\matr{N}$ consists of $M-1$ blocks of size $1$ for the eigenvalue $0$ and one block of size $1$ for the eigenvalue $1$. 
    Therefore, the canonical form of $\matr{T}_{\mathrm{S}}(0)$ consists of $MN$ blocks of size $L$ for the eigenvalue $0$.
    Since the perturbation matrix $\matr{D}(\mu)$ does not have a particular structure other than its linear dependence on $\mu$, the difference between the eigenvalues of the perturbed matrix $\matr{T}_{\mathrm{S}}(\mu)$ and the unperturbed matrix $\matr{T}_{\mathrm{S}}(0)$ is of the order of $\mathcal{O}(\mu^\frac{1}{\alpha})$, where $\alpha = L$ is the size of the largest Jordan block, see~\cite{Wilkinson}, p.~77ff.
    Thus, 
    \begin{align}
        \rho\left(\matr{T}_{\mathrm{S}}(\mu)\right) \le c\mu^\frac{1}{L}
    \end{align} 
    and especially $\rho\left(\matr{T}_{\mathrm{S}}(\mu)\right) < 1$ for $\mu$ small enough.
\end{proof}

Figure~\ref{fig:smoother_specrad} shows for Dahlquist's test problem $u_t = \lambda u$, $u(0) = 1$, that this estimate is severely over-pessimistic, if $L$ is small, but becomes rather accurate, if $L$ becomes larger. 
This does not change significantly when choosing an imaginary value for $\lambda$ of the test equation, as Figure~\ref{fig:smoother_specrad_imag_L64+256_M3LU} shows.
Note, however, that for these large numbers of time-steps $L$ the numerical computation of the spectral radius may be prone to rounding errors.
We refer to the discussion in~\cite{Moser2017PhD} for more details. 

\begin{figure}[t!]
  \centering
  \begin{subfigure}[b]{0.475\textwidth}
    \centering
    \includegraphics[width=\textwidth]{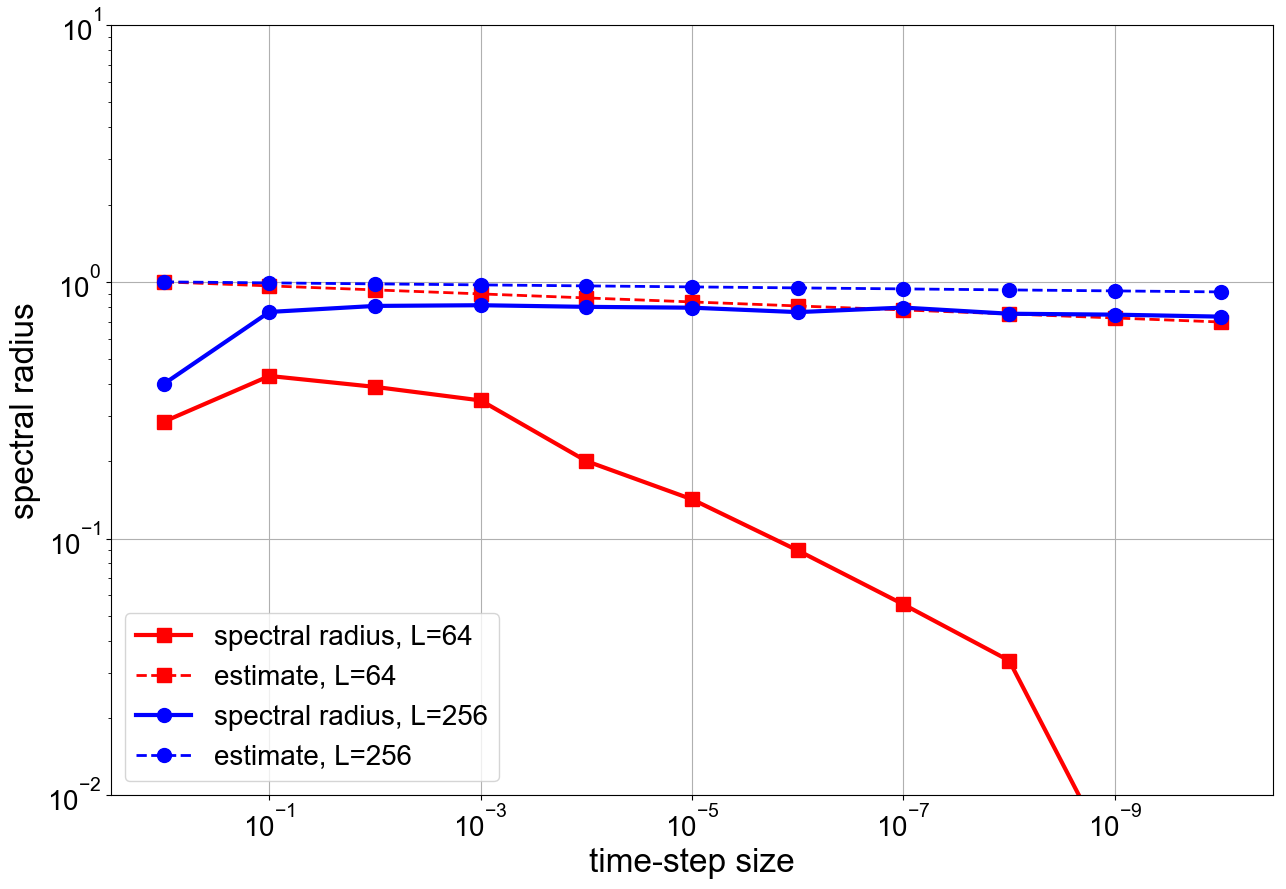}
    \caption{$\lambda=-1$}
    \label{fig:smoother_specrad_real_L64+256_M3LU}
  \end{subfigure}
  \begin{subfigure}[b]{0.475\textwidth}
    \centering
    \includegraphics[width=\textwidth]{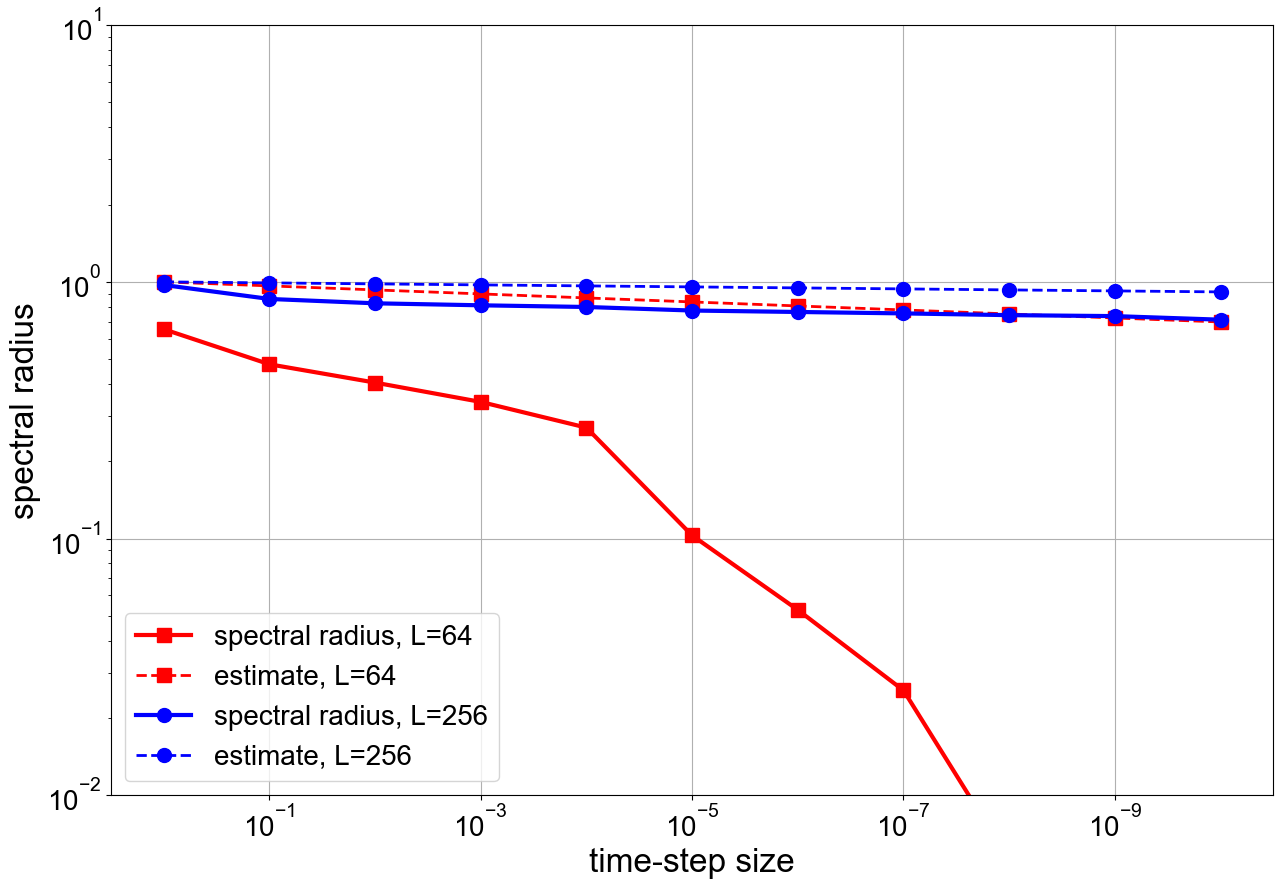}
    \caption{$\lambda=i$}
    \label{fig:smoother_specrad_imag_L64+256_M3LU}
  \end{subfigure}    
  \caption{Convergence of the spectral radius of the smoother vs.~the estimation~\eqref{eq:smoother_est} for Dahlquist's test problem $u_t = \lambda u$, $u(0) = 1$. Left: negative real value $\lambda = -1$, right: purely imaginary value $\lambda = i$. $3$ Gauss-Radau nodes are taken and the LU trick is used for $\matr{Q}_\Delta$, see~\cite{Weiser2014}.}
  \label{fig:smoother_specrad}
\end{figure}

\bigskip
Yet, while this lemma gives a proof of the convergence of the smoother, it cannot be used to estimate the speed of convergence.
The standard way of providing such an estimate would be to bound the norm of the iteration matrix $\matr{T}_{\mathrm{S}}(\mu)$ by something smaller than $1$. 
However, even in the limit $\mu\rightarrow0$ the norm of $\matr{T}_{\mathrm{S}}(\mu) = \matr{T}_{\mathrm{S}}(0)$ is still larger than or equal to $1$ in all feasible matrix norms.
Alternatively, we can look at the $k$th power of the iteration matrix, which corresponds to $k$ applications of the smoother.

\begin{remark}\label{rem:smoo_norm_bound}
    For all $k\in\mathbb{N}$ we have
    \begin{align*}
        \matr{T}_{\mathrm{S}}(\mu)^k = \left(\matr{T}_{\mathrm{S}}(0) + \mathcal{O}(\mu)\right)^k = \matr{T}_{\mathrm{S}}(0)^k + \mathcal{O}(\mu)
    \end{align*}
    for some perturbations of order $\mathcal{O}(\mu)$.
    The matrix $\matr{T}_{\mathrm{S}}(0) = \matr{E} \otimes \matr{H}$ is nilpotent with $\matr{T}_{\mathrm{S}}(0)^L = \matr{0}$, because $\matr{E}^L = 0$.
    Thus, for $k\ge L$,
    \begin{align*}
        \left\lVert\matr{T}_{\mathrm{S}}(\mu)^k\right\rVert \le \left\lVert\matr{T}_{\mathrm{S}}(0)^k\right\rVert + \mathcal{O}(\mu) = \mathcal{O}(\mu).
    \end{align*}
    This shows that for enough iterations of the smoother, the error is reduced in the order of $\mu$.
    However, for $k<L$, this is not true, since there is still a term larger than or equal to $1$ coming from $\matr{E} \otimes \matr{H}$.
    Of course, doing as many smoothing steps as we have time-steps is not feasible, so that this result is rather pathological.
\end{remark}

\subsection{The coarse-grid correction}\label{ssec:cgc_to0}

We now consider the iteration matrix $\matr{T}_{\mathrm{CGC}}$ of the coarse-grid correction with
\begin{align*}
  \matr{T}_{\mathrm{CGC}} = \matr{I}_{LMN} - \matr{T}_C^F \Ptimat^{-1} \matr{T}_F^C \matr{C}
  = \matr{I}_{LMN} - &\matr{T}_C^F\left(\matr{I}_{L\tilde{M}\tilde{N}} - \mu\matr{I}_L\otimes \matr{\tilde{Q}}_\Delta \otimes \matr{\tilde{A}} - \matr{E}\otimes\matr{\tilde{H}}\right)^{-1}\cdot\\
  &\matr{T}_F^C\left(\matr{I}_{LMN} - \mu \matr{I}_L\otimes\matr{Q} \otimes \matr{A} - \matr{E}\otimes\matr{H}\right)
\end{align*}
and write $\matr{T}_{\mathrm{CGC}} = \matr{T}_{\mathrm{CGC}}(\mu)$.
Then,
\begin{align*}
    \matr{T}_{\mathrm{CGC}}(0) &= \matr{I}_{LMN} - \matr{T}_C^F\left(\matr{I}_{L\tilde{M}\tilde{N}} - \matr{E}\otimes\matr{\tilde{H}}\right)^{-1}\matr{T}_F^C\left(\matr{I}_{LMN} - \matr{E}\otimes\matr{H}\right)\\
    &=\matr{I}_{LMN} - \matr{T}_C^F\matr{T}_F^C\left(\matr{I}_{LMN} - \matr{E}\otimes\matr{H}\right)^{-1}\left(\matr{I}_{LMN} - \matr{E}\otimes\matr{H}\right)\\
    &= \matr{I}_{LMN} - \matr{T}_C^F\matr{T}_F^C
\end{align*}
according to Theorem~\ref{th:pfasst_in_matrix_form}.
Therefore, we get
\begin{align*}
    \matr{T}_{\mathrm{CGC}}(\mu) - \matr{T}_{\mathrm{CGC}}(0) = \matr{T}_C^F\Ptimat^{-1}\left(\Ptimat\matr{T}_F^C-\matr{T}_F^C\matr{C}\right).
\end{align*}
Then, with $\matr{T}_F^C = \matr{I}_L\otimes\matr{T}_{F,Q}^C\otimes\matr{T}_{F,A}^C$ we have
\begin{align*}
    \Ptimat\matr{T}_F^C &= \left(\matr{I}_{L\tilde{M}\tilde{N}} - \mu\matr{I}_L\otimes \matr{\tilde{Q}}_\Delta \otimes \matr{\tilde{A}} - \matr{E}\otimes\matr{\tilde{H}}\right)\matr{T}_F^C \\
    &= \matr{T}_F^C\matr{I}_{LMN} - \mu\matr{I}_L\otimes \matr{\tilde{Q}}_\Delta\matr{T}_{F,Q}^C \otimes \matr{\tilde{A}}\matr{T}_{F,A}^C - \matr{T}_F^C\matr{E}\otimes\matr{H}
\end{align*}
and therefore
\begin{align}\label{eq:diff_cgc}
    \matr{T}_{\mathrm{CGC}}(\mu) - \matr{T}_{\mathrm{CGC}}(0) = \mu\matr{T}_C^F\Ptimat^{-1}\left(\matr{I}_L\otimes \left(\matr{T}_{F,Q}^C\matr{Q} \otimes \matr{T}_{F,A}^C\matr{A}-\matr{\tilde{Q}}_\Delta\matr{T}_{F,Q}^C \otimes \matr{\tilde{A}}\matr{T}_{F,A}^C\right)\right).
\end{align}
As before, the norm of the inverse of the coarse-level preconditioner can be bounded by
\begin{align}\label{eq:cgc_mu_bound_to0}
    \left\lVert\Ptimat^{-1}\right\lVert = \left\lVert\Ptimat^{-1}(\mu)\right\lVert = \left\lVert\left(\matr{I}_{L\tilde{M}\tilde{N}}  - \mu\matr{I}_L \otimes \matr{\tilde{Q}_{\Delta}}\otimes\matr{\tilde{A}}- \matr{E} \otimes \matr{\tilde{H}}\right)^{-1}\right\rVert \le c_3(\mu^*_{\mathrm{CGC},0}) = c_3
\end{align}
if $\mu$ is smaller than a given value $\mu^*_{\mathrm{CGC},0}>0$, where the constant $c_3(\mu^*_{\mathrm{CGC},0})$ is again independent of $\mu$.
Together with~\eqref{eq:diff_cgc} this leads to
\begin{align*}
    \left\lVert\matr{T}_{\mathrm{CGC}}(\mu) - \matr{T}_{\mathrm{CGC}}(0)\right\rVert \le c_4\mu
\end{align*}
as for the smoother.
This allows us to write the iteration matrix of the coarse-grid correction as
\begin{align}\label{eq:cgc_perturbed}
    \matr{T}_{\mathrm{CGC}}(\mu) = \matr{T}_{\mathrm{CGC}}(0)+ \mathcal{O}(\mu) = \matr{I}_{LMN} - \matr{T}_C^F\matr{T}_F^C + \mathcal{O}(\mu).
\end{align}
While the eigenvalues of $\matr{T}_{\mathrm{CGC}}(\mu)$ again converge to the eigenvalues of $\matr{T}_{\mathrm{CGC}}(0)$, the eigenvalues of the latter are not zero anymore.
For a partial differential equation in one dimension half of the eigenvalues of $\matr{T}_C^F\matr{T}_F^C $ are zero for standard Lagrangian interpolation and restriction, because the dimension of the coarse space is only of half size.
Therefore, the limit matrix $\matr{T}_{\mathrm{CGC}}(0)$ has a spectral radius of at least $1$.

\subsection{PFASST}

We now couple both results to analyze the full iteration matrix $\matr{T}_{\mathrm{PFASST}} = \matr{T}(\mu)$ of PFASST.
Using~\eqref{eq:smoo_perturbed} and~\eqref{eq:cgc_perturbed}, we obtain
\begin{align*}
    \matr{T}(\mu) &= \matr{T}_{\mathrm{S}}(\mu)\matr{T}_{\mathrm{CGC}}(\mu) = \left(\matr{E} \otimes \matr{H} + \mathcal{O}(\mu)\right)\left(\matr{I}_{LMN} - \matr{T}_C^F\matr{T}_F^C + \mathcal{O}(\mu)\right)\\
    &= \left(\matr{E} \otimes \matr{H}\right)\left(\matr{I}_{LMN} - \matr{T}_C^F\matr{T}_F^C \right) + \mathcal{O}(\mu)\\
    &=\matr{E}\otimes\left(\matr{H}\left(\matr{I}_{MN}-\matr{T}_{C,Q}^F\matr{T}_{F,Q}^C\otimes\matr{T}_{C,A}^F\matr{T}_{F,A}^C\right)\right) + \mathcal{O}(\mu) = \matr{T}(0) + \mathcal{O}(\mu)
\end{align*}
Again, the eigenvalues of $\matr{T}(0)$ are all zero, because the eigenvalues of $\matr{E}$ are all zero.
We can therefore extend Lemma~\ref{lem:smoother_to0} to cover the full iteration matrix of PFASST.

\begin{theorem}\label{th:conv_pfasst_to0}
    PFASST converges for linear problems and Gau\ss-Radau nodes, if the CFL number $\mu$ is small enough.
    More precisely, for $L$ time-steps the spectral radius of the iteration matrix is bounded by
    \begin{align*}
        \rho\left(\matr{T}_{\mathrm{PFASST}}(\mu)\right) \le c\mu^\frac{1}{L}
    \end{align*} 
    for a constant $c>0$ independent of $\mu$, if $\mu < \mu^*_0$ for a fixed value $\mu^*_0>0$.
\end{theorem}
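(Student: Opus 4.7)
The plan is to follow the strategy of Lemma~\ref{lem:smoother_to0} essentially verbatim, but applied to the combined iteration matrix $\matr{T}(\mu)=\matr{T}_{\mathrm{S}}(\mu)\matr{T}_{\mathrm{CGC}}(\mu)$ rather than to the smoother alone. The preceding derivation already supplies the crucial expansion
\begin{align*}
  \matr{T}(\mu) = \matr{T}(0) + \mathcal{O}(\mu), \qquad
  \matr{T}(0) = \matr{E}\otimes\matr{M},
\end{align*}
with $\matr{M}:=\matr{H}\bigl(\matr{I}_{MN}-\matr{T}_{C,Q}^F\matr{T}_{F,Q}^C\otimes\matr{T}_{C,A}^F\matr{T}_{F,A}^C\bigr)$. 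So the only thing left is to apply a perturbation argument of the same Wilkinson type as in Lemma~\ref{lem:smoother_to0}.

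The first step is to justify the $\mathcal{O}(\mu)$ term uniformly. I would combine the bounds \eqref{eq:smoo_mu_bound_to0} and \eqref{eq:cgc_mu_bound_to0} by setting $\mu_0^{*}:=\min(\mu^{*}_{\mathrm{S},0},\mu^{*}_{\mathrm{CGC},0})$. On the closed interval $[0,\mu_0^{*}]$ both $\|\Phatmat^{-1}(\mu)\|$ and $\|\Ptimat^{-1}(\mu)\|$ stay bounded by constants independent of $\mu$, and multiplying the two factor expansions \eqref{eq:smoo_perturbed} and \eqref{eq:cgc_perturbed} shows that the hidden constant in the $\mathcal{O}(\mu)$ remainder of $\matr{T}(\mu)-\matr{T}(0)$ is independent of $\mu$ as well.

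The next step is the Jordan-block analysis of $\matr{T}(0)$. All its eigenvalues are zero, since the eigenvalues of a Kronecker product are products of eigenvalues and $\matr{E}$ is strictly subdiagonal. For the largest Jordan block size $\alpha$ I do not need a full decomposition of $\matr{E}\otimes\matr{M}$; it suffices to observe
\begin{align*}
  \matr{T}(0)^{L} = (\matr{E}\otimes\matr{M})^{L} = \matr{E}^{L}\otimes\matr{M}^{L} = \matr{0},
\end{align*}
because $\matr{E}^{L}=\matr{0}$. Hence the nilpotency index of $\matr{T}(0)$ is at most $L$, so $\alpha\le L$.

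Finally I invoke the perturbation result of Wilkinson (p.~77ff.\ in the reference already used for Lemma~\ref{lem:smoother_to0}): if a matrix with all eigenvalues equal to zero and largest Jordan block of size $\alpha$ is perturbed by an $\mathcal{O}(\mu)$ matrix, then each eigenvalue of the perturbed matrix lies in a disk of radius $\mathcal{O}(\mu^{1/\alpha})$ around zero. Combined with $\alpha\le L$ and the fact that $\mu^{1/\alpha}\le\mu^{1/L}$ for $\mu<1$, this yields the claimed bound $\rho(\matr{T}_{\mathrm{PFASST}}(\mu))\le c\mu^{1/L}$ for $\mu<\mu^{*}_{0}$, possibly after shrinking $\mu^{*}_{0}$ to enforce $\mu<1$. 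The only mildly delicate point is that the Wilkinson bound requires $\mu$ small relative to a constant depending on $\matr{T}(0)$ and its similarity to Jordan form; shrinking $\mu^{*}_{0}$ further if necessary handles this, and I see no genuine obstacle beyond bookkeeping.
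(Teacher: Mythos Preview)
Your proposal is correct and follows essentially the same route as the paper, which simply states that the argument is ``analogous'' to Lemma~\ref{lem:smoother_to0} and fixes $\mu^*_0=\min\{\mu^*_{\mathrm{S},0},\mu^*_{\mathrm{CGC},0}\}$. Your nilpotency observation $(\matr{E}\otimes\matr{M})^L=\matr{E}^L\otimes\matr{M}^L=\matr{0}$ to bound the largest Jordan block by $L$ is in fact a slight streamlining, since it sidesteps an explicit Jordan decomposition of the more complicated second factor $\matr{M}$.
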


\begin{proof}
    The estimate for the spectral radius can be shown in an analogous way as the one for the smoother in Lemma~\ref{lem:smoother_to0}.
    $\mu^*$ is given by $\mu^*_0 = \min\{\mu^*_{\mathrm{S},0}, \mu^*_{\mathrm{CGC},0}\}$, where $\mu^*_{\mathrm{S},0}$ comes from~\eqref{eq:smoo_mu_bound_to0} and $\mu^*_{\mathrm{CGC},0}$ from~\eqref{eq:cgc_mu_bound_to0}.
\end{proof}

We note that this result indeed proves convergence for PFASST in the non-stiff limit, but it does not provide an indication of the speed of convergence.
It rather ensures convergence for the asymptotic case of $\mu\rightarrow 0$.
To get an estimate of the speed of convergence of PFASST, a bound for the norm of the iteration matrix  would be necessary. 
Yet, we could make the same observation as in Remark~\ref{rem:smoo_norm_bound}, but this is again not a particularly useful result.

\section{The stiff limit}

Working with larger and larger CFL numbers $\mu$ requires an additional trick and poses limitations on the spatial problem. 
Again, we will analyze the iteration matrices of the smoother, the coarse-grid correction and the full PFASST iteration separately.
In the last section we will then discuss the norms of the iteration matrices, which in contrast to the non-stiff limit case leads to interesting new insights.

\subsection{The smoother}\label{sec:smoo_toinf}

In order to calculate the stiff limit $\matr{T}_{\mathrm{S}}(\infty)$ of the iteration matrix of the smoother $\matr{T}_{\mathrm{S}} = \matr{T}_{\mathrm{S}}(\mu)$ we write
\begin{align*}
    \matr{T}_{\mathrm{S}} &= \matr{I}_{LMN} - \Phatmat^{-1}\matr{C}\\
    &= \matr{I}_{LMN} - \left(\matr{I}_{LMN} - \mu \matr{I}_L\otimes\matr{Q}_\Delta \otimes \matr{A} \right)^{-1}\left(\matr{I}_{LMN} - \mu \matr{I}_L\otimes\matr{Q} \otimes \matr{A} - \matr{E}\otimes\matr{H}\right)
\end{align*}
This can be written as
\begin{align*}
    \matr{T}_{\mathrm{S}} &= \matr{I}_{LMN} - \left(\frac{1}{\mu}\matr{I}_{LMN} - \matr{I}_L\otimes\matr{Q}_\Delta \otimes \matr{A} \right)^{-1}\left(\frac{1}{\mu}\matr{I}_{LMN} - \frac{1}{\mu}\matr{E}\otimes\matr{H} - \matr{I}_L\otimes\matr{Q} \otimes \matr{A} \right)
\end{align*}
and for $\mu\rightarrow\infty$ we would expect the limit matrix $\matr{T}_{\mathrm{S}}(\infty)$ to be defined as
\begin{align*}
    \matr{T}_{\mathrm{S}}(\infty) = \matr{I}_L\otimes\left(\matr{I}_M-\matr{Q}_\Delta^{-1}\matr{Q}\right)\otimes\matr{I}_N.
\end{align*}
We need to abbreviate this a bit to see the essential details in the following calculation. 
We define:
\begin{align*}
    \matr{M}_1 &= \mu\matr{I}_L\otimes\matr{Q}_\Delta \otimes \matr{A},\\
    \matr{M}_2 &= \mu\matr{I}_L\otimes\matr{Q} \otimes \matr{A},\\
    \matr{M}_3 &= \matr{I}_{LMN} - \matr{E}\otimes\matr{H},
\end{align*}
so that
\begin{align*}
    \matr{T}_{\mathrm{S}}(\mu)  &= \matr{I} - \left(\matr{I} - \matr{M}_1 \right)^{-1}\left(\matr{M}_3 - \matr{M}_2\right),\\
    \matr{T}_{\mathrm{S}}(\infty) &= \matr{I}  - \left(\matr{M}_1 \right)^{-1}\matr{M}_2,
\end{align*}
omitting the dimension index at the identity matrix.
Then we have
\begin{align*}
    \matr{T}_{\mathrm{S}}(\mu) -  \matr{T}_{\mathrm{S}}(\infty) &= \matr{I} - \left(\matr{I} - \matr{M}_1 \right)^{-1}\left(\matr{M}_3 - \matr{M}_2\right) -  \matr{I}  + \matr{M}_1^{-1}\matr{M}_2\\ 
    &= \matr{M}_1^{-1}\matr{M}_2 + \left(\matr{I} - \matr{M}_1 \right)^{-1}\matr{M}_2 - \left(\matr{I} - \matr{M}_1 \right)^{-1}\matr{M}_3\\
    &= \left(\matr{M}_1^{-1} + \left(\matr{I} - \matr{M}_1 \right)^{-1}\right)\matr{M}_2 - \left(\matr{I} - \matr{M}_1 \right)^{-1}\matr{M}_3\\
    &= \left(\matr{M}_1^{-1}\left(\matr{I} - \matr{M}_1 \right) + \matr{I}\right)\left(\matr{I} - \matr{M}_1 \right)^{-1}\matr{M}_2 - \left(\matr{I} - \matr{M}_1 \right)^{-1}\matr{M}_3\\
    &= \matr{M}_1^{-1}\left(\matr{I} - \matr{M}_1 \right)^{-1}\matr{M}_2 - \left(\matr{I} - \matr{M}_1 \right)^{-1}\matr{M}_3.
\end{align*}
In the first term, the matrix $\matr{M}_1^{-1}$ has a factor $\frac{1}{\mu}$ which is removed by the factor $\mu$ of the matrix $\matr{M}_2$.
In the second term, the matrix $\matr{M}_3$ does not depend on $\mu$, so that in both cases only $\left(\matr{I} - \matr{M}_1 \right)^{-1}$ adds a dependence on $\mu$.
More precisely, following the same argument as for non-stiff part,
\begin{align*}
    \left\lVert\left(\matr{I} - \matr{M}_1 \right)^{-1}\right\rVert &= \left\lVert\left(\matr{I}_{LMN} - \mu\matr{I}_L\otimes\matr{Q}_\Delta \otimes \matr{A}\right)^{-1}\right\rVert\\
    &= \frac{1}{\mu}\left\lVert\left(\frac{1}{\mu}\matr{I}_{LMN} - \matr{I}_L\otimes\matr{Q}_\Delta \otimes \matr{A}\right)^{-1}\right\rVert \le \frac{1}{\mu}c_4(\mu^*_{\mathrm{S}, \infty}) = \frac{1}{\mu}c_4
\end{align*}
for all $\mu\ge \mu^*_{\mathrm{S}, \infty} > 0$, as long as $\left(\matr{Q}_\Delta \otimes \matr{A} \right)^{-1}$ exists.
Then we obtain
\begin{align}\label{eq:smoo_mu_bound_toinf}
    \left\lVert\matr{T}_{\mathrm{S}}(\mu) -  \matr{T}_{\mathrm{S}}(\infty)\right\rVert \le c_5\frac{1}{\mu},
\end{align}
so that the iteration matrix $\matr{T}_{\mathrm{S}}(\mu)$ converges linearly to $\matr{T}_{\mathrm{S}}(\infty)$ as $\mu\rightarrow\infty$ and we can write
\begin{align*}
    \matr{T}_{\mathrm{S}}(\mu) = \matr{T}_{\mathrm{S}}(\infty) + \mathcal{O}\left(\frac{1}{\mu}\right) = \matr{I}_L\otimes\left(\matr{I}_M-\matr{Q}_\Delta^{-1}\matr{Q}\right)\otimes\matr{I}_N + \mathcal{O}\left(\frac{1}{\mu}\right) 
\end{align*}
for $\mu$ large enough.
This result looks indeed very similar to the one obtained for the non-stiff limit, but now the eigenvalues of the limit matrix $\matr{T}_{\mathrm{S}}(\infty)$ are not zero anymore, at least not for arbitrary choices of the preconditioner $\matr{Q}_\Delta$.
Yet, if we choose to define this preconditioner using the LU trick~\cite{Weiser2014}, i.e.~$\matr{Q}_\Delta = \matr{U}^T$ for $\matr{Q}^T = \matr{L}\matr{U}$, we can prove the following analog of Lemma~\ref{lem:smoother_to0}, provided that we couple the number of smoothing steps to the number of collocation nodes.

\begin{lemma}\label{lem:smoo_toinf}
    The smoother of PFASST converges for linear problems and Gau\ss-Radau nodes with preconditioning using LU, if the CFL number $\mu$ is large enough and at least $M$ iterations are performed. We further require that $\matr{A}$ is invertible.
    More precisely, the spectral radius of the iteration matrix $\matr{T}_{\mathrm{S}}\left(\mu,k\right)$ is then bounded by
    \begin{align*}
        \rho\left(\matr{T}_{\mathrm{S}}\left(\mu,k\right)\right) \le c\frac{1}{\mu}
    \end{align*} 
    for a constant $c>0$ independent of $\mu$ but depending on $k$, if $k\ge M$ and $\mu > \mu^*_{\mathrm{S},\infty}$ for a fixed value $\mu^*_{\mathrm{S},\infty} > 0$.
\end{lemma}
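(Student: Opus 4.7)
The plan is to mirror the strategy of Remark~\ref{rem:smoo_norm_bound}, replacing the unperturbed matrix $\matr{T}_{\mathrm{S}}(0)=\matr{E}\otimes\matr{H}$ by the stiff limit $\matr{T}_{\mathrm{S}}(\infty)$, the estimate~\eqref{eq:smoo_mu_bound_to0} by its stiff counterpart~\eqref{eq:smoo_mu_bound_toinf}, and the nilpotency of $\matr{E}$ by nilpotency of the quadrature factor $\matr{I}_M-\matr{Q}_\Delta^{-1}\matr{Q}$. Note that the iteration matrix for $k$ smoothing steps is $\matr{T}_{\mathrm{S}}(\mu,k)=\matr{T}_{\mathrm{S}}(\mu)^k$, so it suffices to bound the norm of this power and invoke $\rho(\matr{T}_{\mathrm{S}}(\mu,k))\le\|\matr{T}_{\mathrm{S}}(\mu)^k\|$.

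First, I would write $\matr{T}_{\mathrm{S}}(\mu)=\matr{T}_{\mathrm{S}}(\infty)+\matr{D}(\mu)$, where~\eqref{eq:smoo_mu_bound_toinf} gives $\|\matr{D}(\mu)\|\le c_5/\mu$ for $\mu>\mu^*_{\mathrm{S},\infty}$ (this step implicitly uses the invertibility of $\matr{A}$, required for $\matr{M}_1^{-1}$ to exist). Expanding the $k$th power by multilinearity and applying the triangle inequality, every term in the expansion of $(\matr{T}_{\mathrm{S}}(\infty)+\matr{D}(\mu))^k$ except the pure $\matr{T}_{\mathrm{S}}(\infty)^k$ contains at least one factor $\matr{D}(\mu)$ and is hence bounded by a $\mu$-independent constant (depending on $k$ and on the fixed matrix $\matr{T}_{\mathrm{S}}(\infty)$) times $1/\mu$. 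This yields
\begin{equation*}
  \matr{T}_{\mathrm{S}}(\mu)^k=\matr{T}_{\mathrm{S}}(\infty)^k+\mathcal{O}(1/\mu).
\end{equation*}

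Second, I would establish the nilpotency of $\matr{T}_{\mathrm{S}}(\infty)$ under the LU choice of preconditioner. With $\matr{Q}^T=\matr{L}\matr{U}$ and $\matr{Q}_\Delta=\matr{U}^T$, one computes $\matr{Q}_\Delta^{-1}\matr{Q}=\matr{U}^{-T}(\matr{U}^T\matr{L}^T)=\matr{L}^T$. Because $\matr{L}$ is unit lower triangular (the standard normalization of LU), $\matr{L}^T$ is unit upper triangular, so $\matr{I}_M-\matr{L}^T$ is strictly upper triangular and therefore nilpotent with index at most $M$, i.e.\ $(\matr{I}_M-\matr{L}^T)^M=\matr{0}$. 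Tensoring with the identities gives $\matr{T}_{\mathrm{S}}(\infty)^k=\matr{I}_L\otimes(\matr{I}_M-\matr{Q}_\Delta^{-1}\matr{Q})^k\otimes\matr{I}_N=\matr{0}$ for all $k\ge M$.

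Combining the two pieces, for $k\ge M$ we get $\|\matr{T}_{\mathrm{S}}(\mu)^k\|\le c/\mu$ with $c=c(k)$ independent of $\mu$, and the spectral radius bound $\rho(\matr{T}_{\mathrm{S}}(\mu,k))\le\|\matr{T}_{\mathrm{S}}(\mu)^k\|\le c/\mu$ follows. The main obstacle is the nilpotency step: the perturbation bookkeeping is routine once~\eqref{eq:smoo_mu_bound_toinf} is in hand, but the structural identity $\matr{Q}_\Delta^{-1}\matr{Q}=\matr{L}^T$ is what makes the LU trick crucial here—an arbitrary $\matr{Q}_\Delta$ would leave $\matr{T}_{\mathrm{S}}(\infty)$ with nonzero eigenvalues and the argument would collapse. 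This also explains the coupling $k\ge M$: the index of nilpotency equals the number of collocation nodes, which is why one must perform at least as many sweeps as quadrature points to benefit from the stiff limit.
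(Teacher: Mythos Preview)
Your proposal is correct and follows essentially the same route as the paper: expand $\matr{T}_{\mathrm{S}}(\mu)^k$ around $\matr{T}_{\mathrm{S}}(\infty)^k$ via~\eqref{eq:smoo_mu_bound_toinf}, then use the LU identity $\matr{Q}_\Delta^{-1}\matr{Q}=\matr{L}^T$ to conclude that $\matr{T}_{\mathrm{S}}(\infty)$ is nilpotent of index $M$. The only difference is in the final step: the paper invokes Elsner's eigenvalue perturbation theorem, whereas you use the elementary bound $\rho(\cdot)\le\|\cdot\|$. Since the unperturbed matrix $\matr{T}_{\mathrm{S}}(\infty)^k$ is exactly zero for $k\ge M$, the paper's application of Elsner collapses to precisely $\rho\le\|\matr{D}\|_2$ anyway, so your shortcut is both valid and slightly more direct.
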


\begin{proof}
    For $k\in\mathbb{N}$ iterations of the smoother we have
    \begin{align*}
        \matr{T}_{\mathrm{S}}\left(\mu,k\right) &=  \left(\matr{T}_{\mathrm{S}}(\infty) + \mathcal{O}\left(\frac{1}{\mu}\right)\right)^k = \left(\matr{T}_{\mathrm{S}}(\infty)\right)^k + \mathcal{O}\left(\frac{1}{\mu}\right)
    \end{align*}
    under the conditions of the lemma.
    With $\matr{Q}_\Delta = \matr{U}^T$ and $\matr{Q}^T = \matr{L}\matr{U}$ it is $\matr{I}_M-\matr{Q}_\Delta^{-1}\matr{Q} = \matr{I}_M-\matr{L}^T$, so that $\matr{T}_{\mathrm{S}}(\infty)$ is strictly upper diagonal and therefore nilpotent with $\left(\matr{T}_{\mathrm{S}}(\infty)\right)^M = \matr{0}$.
    Therefore, after at least $M$ iterations the stiff limit matrix of the smoother is actually  $\matr{0}$ and so are its eigenvalues.
    Instead of using the general perturbation result as in the non-stiff limit, we now can apply Elsner's theorem~\cite{stewart_perturbation_1990}, stating that for a perturbation $\matr{D}\in\mathbb{C}^{N\times N}$ of a matrix $\matr{T}\in\mathbb{C}^{N\times N}$, i.e. for $\matr{\hat{T}} = \matr{T} + \matr{D}$ it is
    \begin{align}
        \max_{i=1,...,N}\min_{j=1,...,N}\lvert\lambda_i(\matr{\hat{T}})-\lambda_j(\matr{T})\rvert \le \left(\lVert\matr{\hat{T}}\rVert_2 + \lVert\matr{T}\rVert_2\right)^{1-1/N}\lVert\matr{D}\rVert_2^{1/N},
        \label{eq:pertrub_ineq}
    \end{align}
    where $\lambda_i(\matr{\hat{T}})$ corresponds to the $i$th eigenvalue of $\matr{\hat{T}}$ and $\lambda_j(\matr{T})$ to the $j$th eigenvalue of $\matr{T}$.
    In our case, $\matr{T} = \left(\matr{T}_{\mathrm{S}}(\infty)\right)^k = \matr{0}$ for $k \ge M$, so that the left-hand side of this inequality is just the spectral radius of $\matr{\hat{T}} = \matr{T}_{\mathrm{S}}\left(\mu,k\right)$.
    The norm of the perturbation matrix $\matr{D}$ is bounded via~\eqref{eq:smoo_mu_bound_toinf}, so that for $L$ steps, $M$ nodes, $N$ degrees-of-freedom, $k\ge M$ iterations we have
    \begin{align*}
         \rho\left(\matr{T}_{\mathrm{S}}\left(\mu,k\right)\right)  &\le \left(\left\lVert\matr{T}_{\mathrm{S}}\left(\mu,k\right)\right\rVert_2 + \left\lVert\matr{T}_{\mathrm{S}}\left(\infty,k\right)\right\rVert_2\right)^{1-\frac{1}{LMN}}\left\lVert\matr{D}\right\rVert_2^\frac{1}{LMN}\\
        &\le \left(2\left\lVert\matr{0}\right\rVert_2 + \left\lVert\matr{D}\right\rVert_2\right)^{1-\frac{1}{LMN}}\left\lVert\matr{D}\right\rVert_2^\frac{1}{LMN} =  \left\lVert\matr{D}\right\rVert_2\\
        &= \left\lVert\matr{T}_{\mathrm{S}}(\mu, k) -  \matr{T}_{\mathrm{S}}(\infty, k)\right\rVert \le c\frac{1}{\mu}.
    \end{align*}
    by using $\left\lVert\matr{T}_{\mathrm{S}}(\mu, k)\right\rVert \le \left\lVert\matr{T}_{\mathrm{S}}(\infty, k)\right\rVert + \left\lVert\matr{D}\right\rVert$ for the second inequality.
\end{proof}

\begin{figure}[t!]
  \centering
  \begin{subfigure}[b]{0.475\textwidth}
    \centering
    \includegraphics[width=\textwidth]{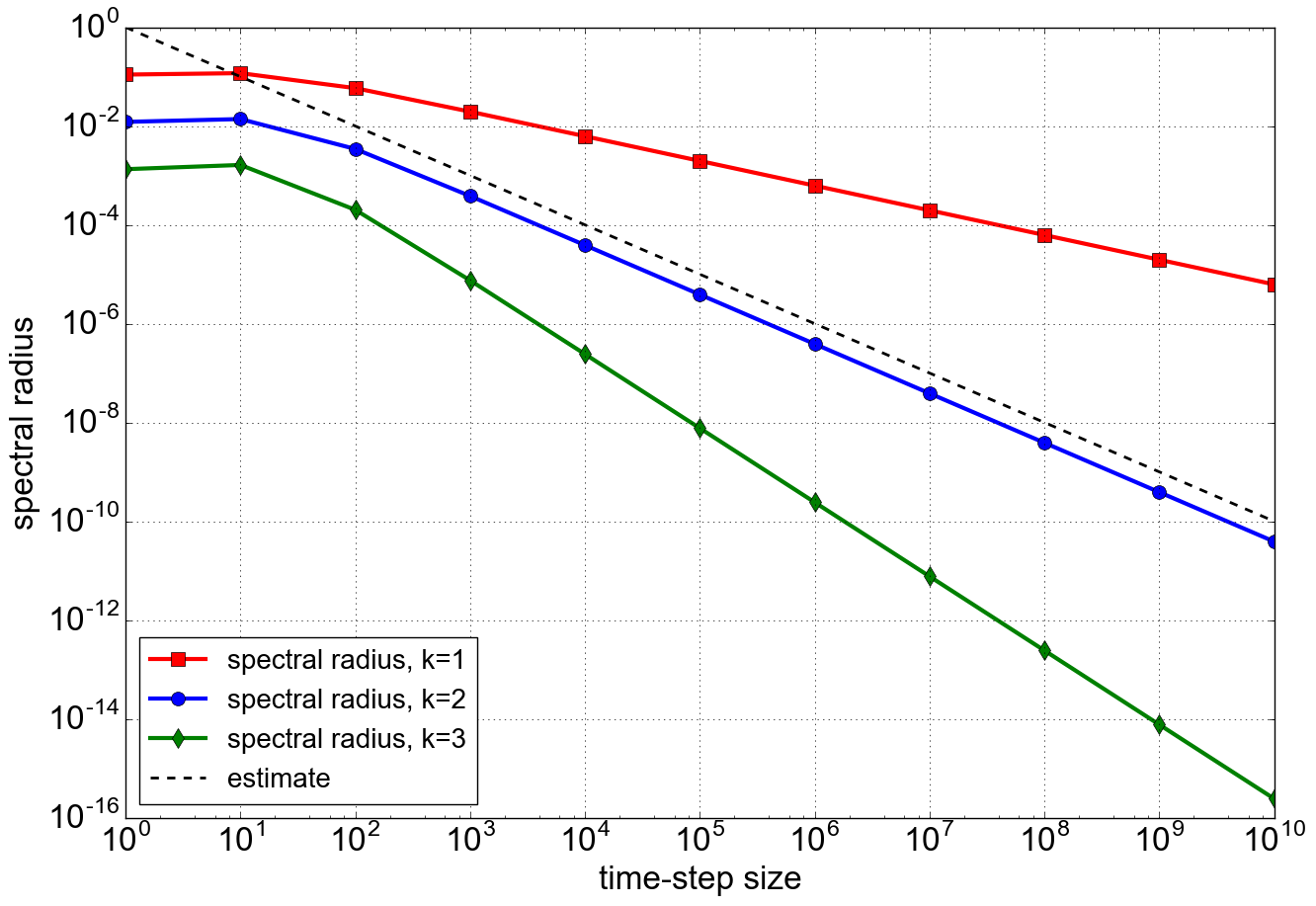}
    \caption{$M=3$ and $\lambda=-1$}
    \label{fig:smoother_specrad_toinf_M3_LU_real}
  \end{subfigure}
  \begin{subfigure}[b]{0.475\textwidth}
    \centering
    \includegraphics[width=\textwidth]{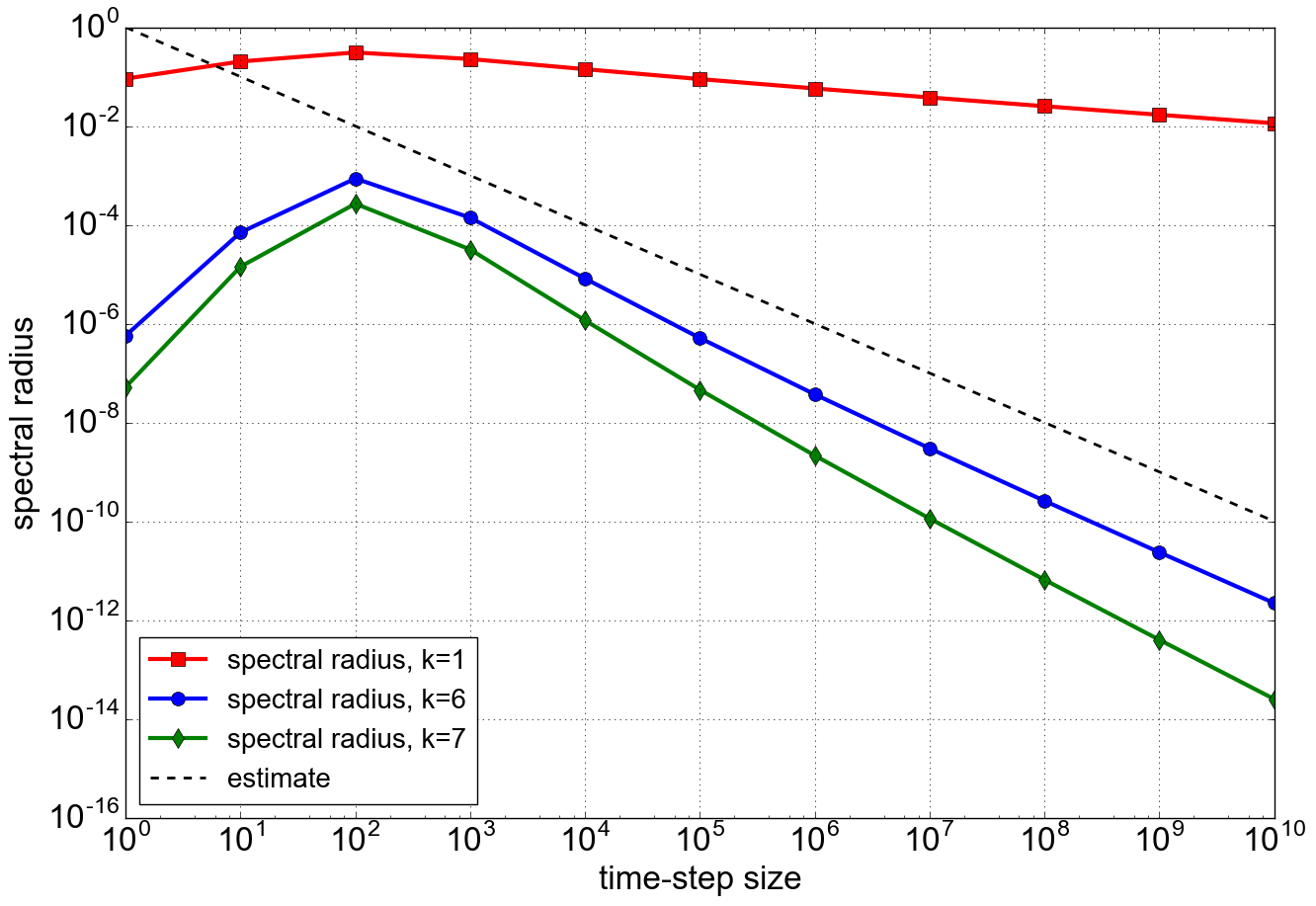}
    \caption{$M=7$ and $\lambda=-1$}
    \label{fig:smoother_specrad_toinf_M7_LU_real}
  \end{subfigure}
  \begin{subfigure}[b]{0.475\textwidth}
    \centering
    \includegraphics[width=\textwidth]{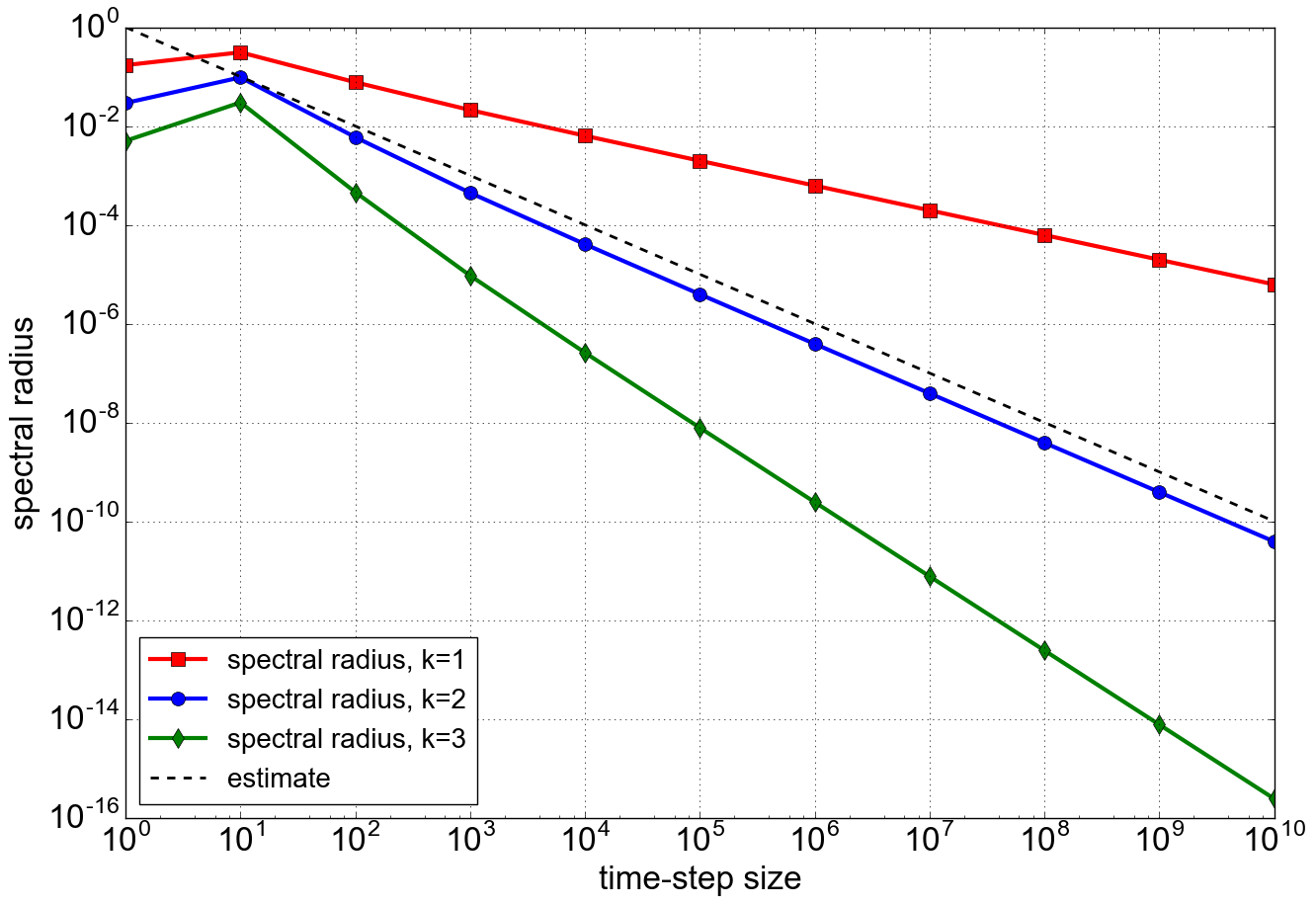}
    \caption{$M=3$ and $\lambda=i$}
    \label{fig:smoother_specrad_toinf_M3_LU_img}
  \end{subfigure}
  \begin{subfigure}[b]{0.475\textwidth}
    \centering
    \includegraphics[width=\textwidth]{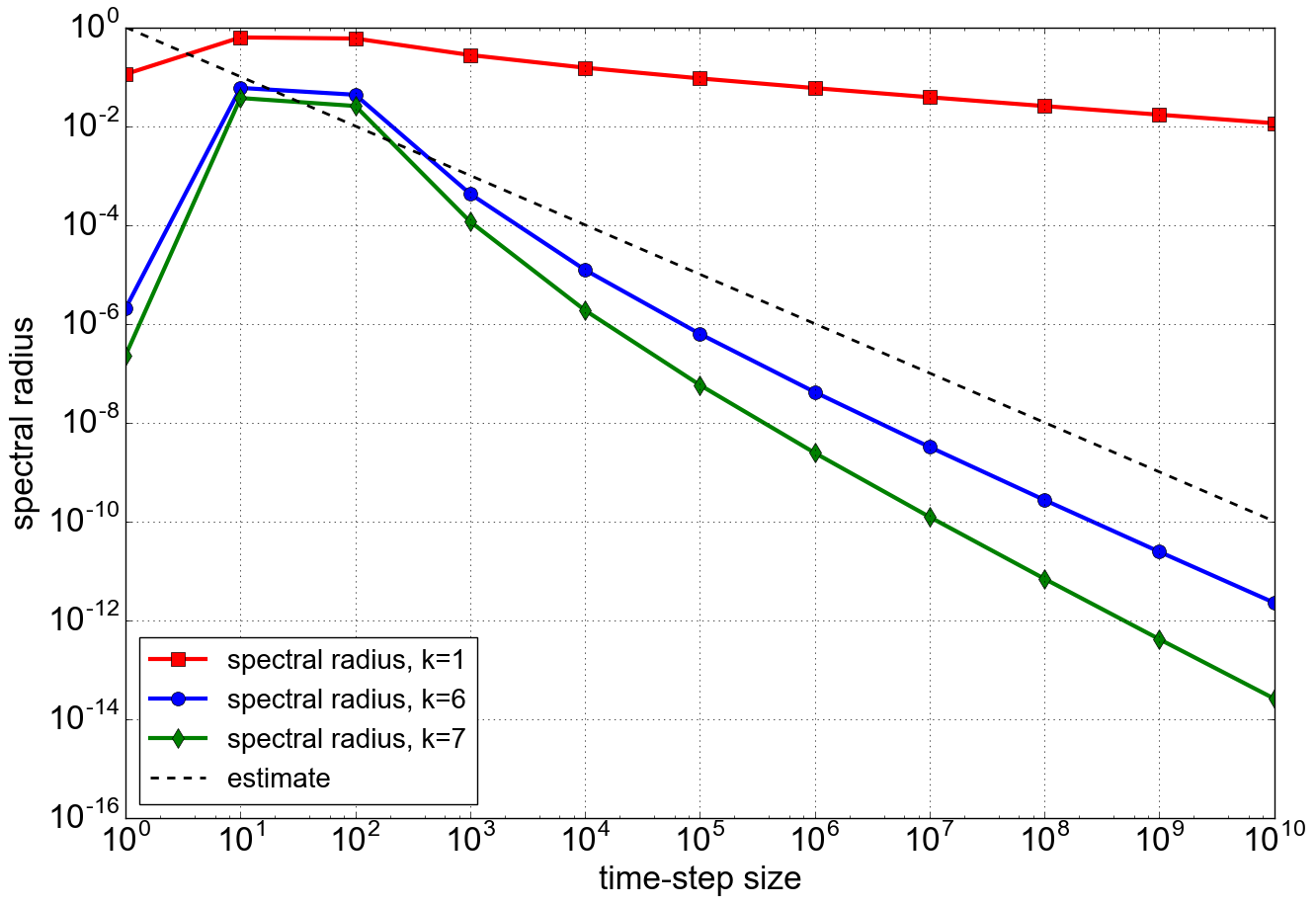}
    \caption{$M=7$ and $\lambda=i$}
    \label{fig:smoother_specrad_toinf_M7_LU_img}
  \end{subfigure}  
  \caption{Spectral radius of the iteration matrix of the smoother with LU decomposition for $\lambda = -1$ (upper) and $\lambda=i$ (lower), $L=1$. Left: $M=3$, right: $M=7$.}
  \label{fig:smoother_specrad_toinf}
\end{figure}

This result shows that the spectral radius goes to zero with the same speed as $\mu$ goes to infinity, independently of the number of time-step, the number of collocation nodes or the spatial resolution.
Yet, the condition of requiring at least $M$ smoothing steps is rather severe since standard simulations with PFASST typically perform only one or two steps here.
We show in Figure~\ref{fig:smoother_specrad_toinf} the spectral radii of the iteration matrix of the smoother for Dahlquist's test problem $u_t = \lambda u$, $u(0) = 1$ with $\lambda = -1$, $\lambda=i$ (upper and lower figures) and larger and larger time-step sizes $\dt$ (with $\mu = \lambda\dt$), using $M=3$ and $M=7$ Gau\ss-Radau nodes (left and right figures).
There are a few interesting things to note here: 
First, for the asymptotics it does not matter which $\lambda\in\mathbb{C}$ of the two we choose, the plots look the same.
Only for small values of $\dt$ the spectral radii differ significantly, leading to a severely worse convergence behavior for complex eigenvalues.
Second, the result does not depend on the number of time-steps $L$, so choosing $L=1$ is reasonable here (and the plots do not change for $L>1$, which is a key difference to the results for $\mu\rightarrow0$).
Third, the estimate of the spectral radius is rather pessimistic, again.
More precisely, already for $k=M-1$ the linear slope of the estimation is met (and, actually, surpassed), while for $k>M-1$ the convergence is actually faster.
This is not reflected in the result above, but it shows that this estimate is only a rather loose bound.
However, we note that in Section~\ref{sec:summary} we see a different outcome, more in line with the theoretical estimate.
Note that in Figure~\ref{fig:smoother_specrad_toinf}, the constant $c$ is not included so that the estimate depicted there is not necessarily an upper bound, but reflects the asymptotics. 

\bigskip

For a more complete overview of the smoother, Figure~\ref{fig:smoother_specrad_full} shows the spectral radius of its iteration matrix for fixed $L$, $M$ and Dahlquist's test for different values $\mu = \dt\lambda$, choosing the LU trick (Fig.~\ref{fig:heatmap_smoother_full_Nsteps4_M3_LU}) and the classical implicit Euler method or right-hand side rule (Fig.~\ref{fig:heatmap_smoother_full_Nsteps4_M3_IE}) for $\matr{Q}_\Delta$.
Following Theorem~\ref{lem:smoo_toinf}, we choose $k=M=3$ smoothing steps.
Note that the scale of the colors is logarithmic, too, and all values for the spectral radius are considerably below $1$.
The largest for the LU trick is at about $0.05$ and for the implicit Euler at about $0.17$.
We can nicely see how the smoother converges for $\mu\rightarrow 0$ as well as for $\mu\rightarrow\infty$ in both cases, although the LU case is much more pronounced.
However, we also see that there is a banded area, where the spectral radius is significantly higher than in the regions of large or small $\mu$.
This is most likely due to the properties of the underlying preconditioner and can be observed on the real axis for standard SDC as well, see~\cite{Weiser2014}.
Oddly, this band does not describe a circle but rather a box, hitting the axes with its maximum at around $-10$ and $10i$.
It seems unlikely that a closed description of this area, i.e.\ filling the gap between $\mu\rightarrow0$ and $\mu\rightarrow\infty$ is easily possible.

\begin{figure}[t!]
  \centering
  \begin{subfigure}[b]{0.475\textwidth}
    \centering
    \includegraphics[width=\textwidth]{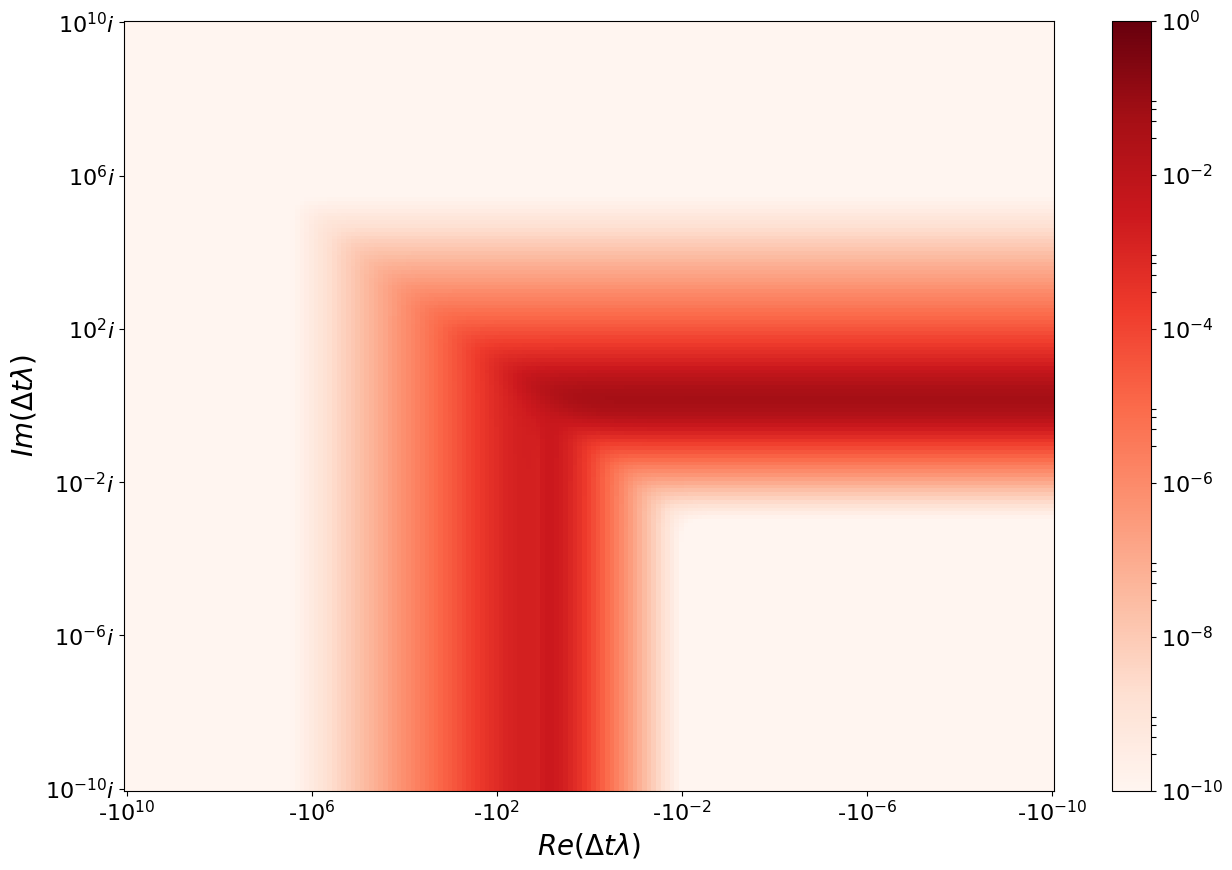}
    \caption{LU-trick}
    \label{fig:heatmap_smoother_full_Nsteps4_M3_LU}
  \end{subfigure}
  \begin{subfigure}[b]{0.475\textwidth}
    \centering
    \includegraphics[width=\textwidth]{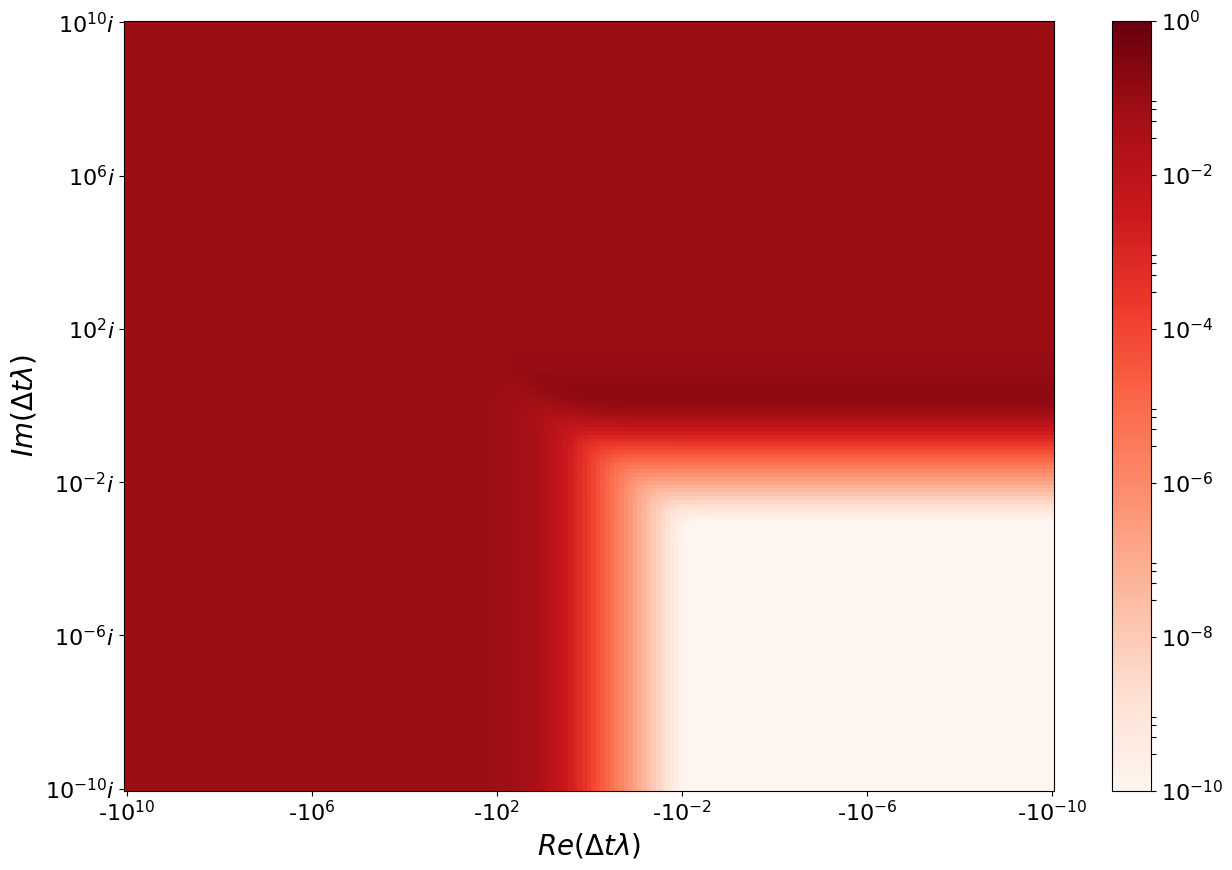}
    \caption{Implicit Euler}
    \label{fig:heatmap_smoother_full_Nsteps4_M3_IE}
  \end{subfigure}    
  \caption{Spectral radius of the iteration matrix of the smoother for different complex values of $\dt\lambda$. Fixed values of $M=3$ and $L=4$, $k=M=3$ smoothing steps. Left: LU-trick for $\matr{Q}_\Delta$, right: implicit Euler for $\matr{Q}_\Delta$.}
  \label{fig:smoother_specrad_full}
\end{figure}

\subsection{The coarse-grid correction}\label{sec:cgc_toinf}

We now focus on the iteration matrix $\matr{T}_{\mathrm{CGC}}= \matr{T}_{\mathrm{CGC}}(\mu)$ of the coarse-grid correction with
\begin{align*}
  \matr{T}_{\mathrm{CGC}} = \matr{I}_{LMN} - \matr{T}_C^F \Ptimat^{-1} \matr{T}_F^C \matr{C}
  = \matr{I}_{LMN} - &\matr{T}_C^F\left(\matr{I}_{L\tilde{M}\tilde{N}} - \mu\matr{I}_L\otimes \matr{\tilde{Q}}_\Delta \otimes \matr{\tilde{A}} - \matr{E}\otimes\matr{\tilde{H}}\right)^{-1}\cdot\\
  &\matr{T}_F^C\left(\matr{I}_{LMN} - \mu \matr{I}_L\otimes\matr{Q} \otimes \matr{A} - \matr{E}\otimes\matr{H}\right).
\end{align*}
Using abbreviations
\begin{align*}
    \matr{\tilde{M}}_1 &= \mu\matr{I}_L\otimes \matr{\tilde{Q}}_\Delta \otimes \matr{\tilde{A}},\\
    \matr{M}_2 &= \mu\matr{I}_L\otimes\matr{Q} \otimes \matr{A},\\
    \matr{M}_3 &= \matr{I}_{LMN} - \matr{E}\otimes\matr{H},\\
    \matr{\tilde{M}}_3 &=\matr{I}_{L\tilde{M}\tilde{N}} - \matr{E}\otimes\matr{\tilde{H}}
\end{align*}
we have
\begin{align*}
    \matr{T}_{\mathrm{CGC}}(\mu)  = \matr{I} - \matr{T}_C^F\left(\matr{\tilde{M}}_3 - \matr{\tilde{M}}_1\right)^{-1}\matr{T}_F^C\left(\matr{M}_3 - \matr{M}_2\right).
\end{align*}
If $\left(\matr{\tilde{Q}}_\Delta \otimes \matr{\tilde{A}} \right)^{-1}$ exists, we define
\begin{align*}
    \matr{T}_{\mathrm{CGC}}(\infty) = \matr{I}_{LMN} - \matr{T}_C^F \left(\matr{I}_L\otimes\matr{\tilde{Q}}_\Delta \otimes \matr{\tilde{A}}\right)^{-1} \matr{T}_F^C \left(\matr{I}_L\otimes\matr{Q} \otimes \matr{A}\right)
\end{align*}
and write it more briefly as
\begin{align*}
    \matr{T}_{\mathrm{CGC}}(\infty)  = \matr{I} - \matr{T}_C^F\matr{\tilde{M}}_1^{-1}\matr{T}_F^C\matr{M}_2.
\end{align*}
With the same ideas and rearrangements we have used for the smoother in the previous section, we obtain after somewhat lengthy algebra
\begin{align*}
    \matr{T}_{\mathrm{CGC}}(\mu)  - \matr{T}_{\mathrm{CGC}}(\infty) = \matr{T}_C^F\matr{\tilde{M}}_1^{-1}\matr{\tilde{M}}_3\left(\matr{\tilde{M}}_3 - \matr{\tilde{M}}_1\right)^{-1}\matr{T}_F^C\matr{M}_2 - \matr{T}_C^F\left(\matr{\tilde{M}}_3 - \matr{\tilde{M}}_1\right)^{-1}\matr{T}_F^C\matr{M}_3.
\end{align*}
In the first term, $\matr{\tilde{M}}_1^{-1}$ adds a factor $\frac{1}{\mu}$, which is removed by $\mu$ from $\matr{M}_2$. $\matr{M}_3$ does not have a dependence on $\mu$, so for both terms only $\left(\matr{\tilde{M}}_3 - \matr{\tilde{M}}_1\right)^{-1} $ is relevant.
We have
\begin{align}
    \begin{split}\label{eq:est_M3M1}
        \left\lVert\left(\matr{\tilde{M}}_3 - \matr{\tilde{M}}_1\right)^{-1}\right\rVert &= \left\lVert\left(\matr{I}_{L\tilde{M}\tilde{N}} - \mu\matr{I}_L\otimes \matr{\tilde{Q}}_\Delta \otimes \matr{\tilde{A}} - \matr{E}\otimes\matr{\tilde{H}}\right)^{-1}\right\rVert\\
        &= \frac{1}{\mu}\left\lVert\left(\frac{1}{\mu}\matr{I}_{L\tilde{M}\tilde{N}} - \matr{I}_L\otimes \matr{\tilde{Q}}_\Delta \otimes \matr{\tilde{A}} - \frac{1}{\mu}\matr{E}\otimes\matr{\tilde{H}}\right)^{-1}\right\rVert\\
        &\le \frac{1}{\mu}c_6(\mu^*_{\mathrm{CGC}, \infty}) = \frac{1}{\mu}c_6
    \end{split}
\end{align}
for all $\mu\ge \mu^*_{\mathrm{CGC}, \infty} > 0$ with the same argument as before.
Therefore, $\matr{T}_{\mathrm{CGC}}(\mu)$ converges to $\matr{T}_{\mathrm{CGC}}(\infty)$ linearly as $\mu\rightarrow\infty$ and we can write
\begin{align*}
     \matr{T}_{\mathrm{CGC}}\left(\mu\right) &= \matr{T}_{\mathrm{CGC}}(\infty) + \mathcal{O}\left(\frac{1}{\mu}\right)\\
     &= \matr{I}_{LMN} - \matr{T}_C^F \left(\matr{I}_L\otimes\matr{\tilde{Q}}_\Delta \otimes \matr{\tilde{A}}\right)^{-1} \matr{T}_F^C \left(\matr{I}_L\otimes\matr{Q} \otimes \matr{A}\right) + \mathcal{O}\left(\frac{1}{\mu}\right),
 \end{align*}
for $\mu$ large enough.

\subsection{PFASST}

We can now combine both parts to obtain an analog of Theorem~\ref{th:conv_pfasst_to0}.

\begin{theorem}\label{th:conv_pfasst_muinf}
    PFASST converges for linear problems and Gau\ss-Radau nodes with preconditioning using LU, if the CFL number $\mu$ is large enough and at least $M$ smoothing steps are performed. We further require that $\matr{A}$ and $\matr{\tilde{A}}$ are invertible.
    More precisely, the spectral radius of the iteration matrix $\matr{T}_{\mathrm{PFASST}}\left(\mu,k\right)$ is then bounded by
    \begin{align*}
        \rho\left(\matr{T}_{\mathrm{PFASST}}\left(\mu,k\right)\right) \le c\frac{1}{\mu}
    \end{align*} 
    for a constant $c>0$ independent of $\mu$ but depending on $k$, if $k\ge M$ and $\mu > \mu^*_\infty$ for a fixed value $\mu^*_\infty > 0$.
\end{theorem}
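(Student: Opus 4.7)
The plan is to imitate the structure of the proof of Lemma~\ref{lem:smoo_toinf}, but applied to the product $\matr{T}_{\mathrm{PFASST}}(\mu,k) = \matr{T}_{\mathrm{S}}(\mu,k)\matr{T}_{\mathrm{CGC}}(\mu)$ rather than to $\matr{T}_{\mathrm{S}}$ alone. The key point is that the $\mathcal{O}(1/\mu)$ asymptotics already established in the previous subsections propagate through the product, provided we can argue that the surviving leading-order term is zero.

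First, I would write the PFASST iteration matrix as the product of the smoothing and coarse-grid-correction iterations, as given in Theorem~\ref{th:pfasst_in_matrix_form}, and substitute the asymptotic expansions derived above. From Lemma~\ref{lem:smoo_toinf} (whose hypotheses, LU preconditioning, invertibility of $\matr{A}$, $k\ge M$, and $\mu$ large enough, are exactly those assumed here) we have $\matr{T}_{\mathrm{S}}(\mu,k) = \left(\matr{T}_{\mathrm{S}}(\infty)\right)^{k} + \mathcal{O}(1/\mu)$, and $\left(\matr{T}_{\mathrm{S}}(\infty)\right)^{k} = \matr{0}$ for $k\ge M$, so
\begin{align*}
    \matr{T}_{\mathrm{S}}(\mu,k) = \mathcal{O}\!\left(\tfrac{1}{\mu}\right).
\end{align*}
From Section~\ref{sec:cgc_toinf} we have $\matr{T}_{\mathrm{CGC}}(\mu) = \matr{T}_{\mathrm{CGC}}(\infty) + \mathcal{O}(1/\mu)$, which in particular means that $\|\matr{T}_{\mathrm{CGC}}(\mu)\|_2$ is bounded by a constant independent of $\mu$ once $\mu > \mu^*_{\mathrm{CGC},\infty}$, since the limit $\matr{T}_{\mathrm{CGC}}(\infty)$ is a fixed matrix not depending on $\mu$.

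Multiplying the two expansions, the submultiplicative property of the induced matrix norm gives
\begin{align*}
    \left\lVert\matr{T}_{\mathrm{PFASST}}(\mu,k)\right\rVert_2
    \le \left\lVert\matr{T}_{\mathrm{S}}(\mu,k)\right\rVert_2 \cdot \left\lVert\matr{T}_{\mathrm{CGC}}(\mu)\right\rVert_2
    \le c'\,\tfrac{1}{\mu},
\end{align*}
valid for $\mu > \mu^*_\infty := \max\{\mu^*_{\mathrm{S},\infty}, \mu^*_{\mathrm{CGC},\infty}\}$ and $k\ge M$. From here one concludes by the standard inequality $\rho(\matr{T}) \le \lVert\matr{T}\rVert_2$; alternatively, to remain in the spirit of the smoother proof, one can apply Elsner's perturbation theorem~\eqref{eq:pertrub_ineq} with $\matr{T} = \matr{0}$ (the leading term vanishes because the coefficient matrix $\left(\matr{T}_{\mathrm{S}}(\infty)\right)^k$ annihilates everything) and $\matr{D} = \matr{T}_{\mathrm{PFASST}}(\mu,k)$ itself, yielding the same $\mathcal{O}(1/\mu)$ bound on the spectral radius.

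The only non-routine step is verifying that multiplying by $\matr{T}_{\mathrm{CGC}}(\mu)$ on the right does not spoil the $1/\mu$ decay; this is precisely where it is essential that $\matr{T}_{\mathrm{CGC}}(\infty)$ is well-defined and $\mu$-independent, which is guaranteed by the invertibility assumption on $\matr{\tilde{A}}$ (needed so that $\left(\matr{\tilde{Q}}_\Delta\otimes\matr{\tilde{A}}\right)^{-1}$ exists in the definition of $\matr{T}_{\mathrm{CGC}}(\infty)$). The rest is formal algebra and the constant $c$ inherits its dependence on $k$ from the smoother bound.
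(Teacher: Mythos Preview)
Your proposal is correct and follows essentially the same route as the paper: expand $\matr{T}_{\mathrm{S}}(\mu)^k$ and $\matr{T}_{\mathrm{CGC}}(\mu)$ about their stiff limits, use that $(\matr{T}_{\mathrm{S}}(\infty))^k=\matr{0}$ for $k\ge M$ so the product is $\mathcal{O}(1/\mu)$, and then bound the spectral radius. The only cosmetic difference is that you observe $\rho(\cdot)\le\lVert\cdot\rVert_2$ suffices once the norm bound is in hand, whereas the paper invokes Elsner's theorem again; your shortcut is perfectly valid and arguably cleaner here.
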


\begin{proof}
    For $k\in\mathbb{N}$ iterations of the smoother we have with Lemma~\ref{lem:smoo_toinf} and the derivations from the previous section
    \begin{align*}
        \matr{T}_{\mathrm{PFASST}}\left(\mu,k\right) &= \left(\matr{T}_{\mathrm{S}}\left(\mu\right)\right)^k\matr{T}_{\mathrm{CGC}}\left(\mu\right)
        = \left(\matr{T}_{\mathrm{S}}(\infty)\right)^k\matr{T}_{\mathrm{CGC}}(\infty) + \mathcal{O}\left(\frac{1}{\mu}\right) = \mathcal{O}\left(\frac{1}{\mu}\right).
    \end{align*}
    The estimate of the spectral radius uses again Elsner's theorem from~\cite{stewart_perturbation_1990}, see the proof of Lemma~\ref{lem:smoo_toinf}.
\end{proof}

\begin{remark}
  Since the inverses of $\matr{Q}_\Delta$, $\matr{\tilde{Q}}_\Delta$, $\matr{A}$ and $\matr{\tilde{A}}$ appear in the limit matrices, their existence is essential and the assumptions of this lemma are a common theme in this section.
  We need to point out, however, that assuming the existence of these inverses is rather restrictive, since problems like the 1D heat equation on periodic boundaries are already excluded.
  However, this assumption can be weakened by considering a pseudoinverse that acts on the orthogonal complement of the null space of $\matr{A}$ or $\matr{\tilde{A}}$, only.
  In the case of e.g.\ periodic boundary conditions this results in considering the whole space except for vectors representing a constant function, see also the discussion in Section~\ref{sec:summary}.
\end{remark}

\subsection{Smoothing and approximation property}

For the stiff limit case, bounding the norms of the iteration matrices actually provides insight into the relationship between PFASST and standard multigrid theory. 
Following~\cite{hackbusch1985multi} we now analyze the smoothing and the approximation property of PFASST.
Note that we now interpret $\mu\rightarrow\infty$ as $\dx\rightarrow 0$ so that in the following results the appearance of $\mu$ in the denominator and numerator is counterintuitive.
We start with the approximation property, which is straightforward to show.

\begin{lemma}\label{lem:approx_prop_muinf}
    The coarse-grid correction of PFASST for linear problems satisfies the approximation property if the CFL number $\mu$ is large enough and if $\matr{A}$ and $\matr{\tilde{A}}$ are invertible, i.e.~it holds
    \begin{align*}
        \left\lVert\matr{C}^{-1} - \matr{T}_C^F \Ptimat^{-1} \matr{T}_F^C\right\rVert \le c\frac{1}{\mu}
    \end{align*}
    for $\mu$ large enough, with a constant $c>0$ independent of $\mu$.
\end{lemma}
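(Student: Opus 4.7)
The plan is to estimate $\matr{C}^{-1}$ and $\matr{T}_C^F \Ptimat^{-1} \matr{T}_F^C$ separately and then invoke the triangle inequality. Both operators become small like $1/\mu$ in the stiff limit for the same structural reason: the term proportional to $\mu$ is dominant, and pulling it out as a scalar prefactor automatically produces the $1/\mu$ scaling on the inverse.

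First I would treat $\matr{C}^{-1}$. Writing $\matr{C} = -\mu\matr{G} + (\matr{I}_{LMN} - \matr{E}\otimes\matr{H})$ with $\matr{G} = \matr{I}_L\otimes\matr{Q}\otimes\matr{A}$, I factor out the $\mu$-proportional piece:
\begin{equation*}
   \matr{C} = -\mu\matr{G}\left(\matr{I} - \tfrac{1}{\mu}\matr{G}^{-1}\bigl(\matr{I}_{LMN}-\matr{E}\otimes\matr{H}\bigr)\right).
\end{equation*}
Since the Gauss-Radau collocation matrix $\matr{Q}$ is invertible (the quadrature nodes are distinct) and $\matr{A}$ is invertible by hypothesis, $\matr{G}^{-1}$ exists. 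For $\mu$ large enough the bracketed factor is close to $\matr{I}$, a Neumann series argument yields a uniformly bounded inverse, and we conclude $\|\matr{C}^{-1}\|\le c_1/\mu$ with $c_1$ independent of $\mu$.

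For the second piece, the identical manipulation was essentially carried out in Section~\ref{sec:cgc_toinf}: the chain of inequalities culminating in~\eqref{eq:est_M3M1} already shows $\|\Ptimat^{-1}\|\le c_2/\mu$ provided $\tilde{\matr{A}}$ (and $\tilde{\matr{Q}}_\Delta$) is invertible. The transfer operators $\matr{T}_F^C$ and $\matr{T}_C^F$ are fixed and $\mu$-independent, so submultiplicativity gives
\begin{equation*}
   \|\matr{T}_C^F \Ptimat^{-1} \matr{T}_F^C\| \le \|\matr{T}_C^F\|\,\|\Ptimat^{-1}\|\,\|\matr{T}_F^C\| \le \frac{c_3}{\mu}.
\end{equation*}
The triangle inequality then closes the argument: $\|\matr{C}^{-1} - \matr{T}_C^F \Ptimat^{-1} \matr{T}_F^C\| \le (c_1+c_3)/\mu$.

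The main point to flag is conceptual rather than technical. There is no genuine obstacle in the estimate itself, because both the true inverse and its coarse surrogate individually collapse at rate $1/\mu$ in the stiff limit; the triangle inequality is sufficient precisely because the approximation property here does not require any spectral agreement between fine and coarse operators, only the joint smallness of both resolvents. What is delicate is the invertibility of $\matr{A}$ and $\tilde{\matr{A}}$, which is exactly the restriction already noted in the preceding remark and which, on problems like the 1D heat equation with periodic boundaries, must be relaxed to a pseudoinverse on the complement of the null space.
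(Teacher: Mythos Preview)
Your proposal is correct and follows essentially the same route as the paper: bound $\lVert\matr{C}^{-1}\rVert$ and $\lVert\matr{T}_C^F\Ptimat^{-1}\matr{T}_F^C\rVert$ separately by $c/\mu$ and combine via the triangle inequality, with the second bound taken directly from~\eqref{eq:est_M3M1}. The only cosmetic difference is that you spell out the $\matr{C}^{-1}$ estimate via an explicit Neumann series (requiring invertibility of $\matr{Q}$, which holds for Gauss--Radau nodes), whereas the paper simply remarks that the same factor-out-$1/\mu$-and-bound-by-continuity manipulation used for~\eqref{eq:est_M3M1} applies verbatim to $(\matr{M}_3-\matr{M}_2)^{-1}$.
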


\begin{proof}
    As in Section~\ref{sec:cgc_toinf} we make use of the abbreviations
    \begin{align*}
        \matr{\tilde{M}}_1 &= \mu\matr{I}_L\otimes \matr{\tilde{Q}}_\Delta \otimes \matr{\tilde{A}},\\
        \matr{M}_2 &= \mu\matr{I}_L\otimes\matr{Q} \otimes \matr{A},\\
        \matr{M}_3 &= \matr{I}_{LMN} - \matr{E}\otimes\matr{H},\\
        \matr{\tilde{M}}_3 &=\matr{I}_{L\tilde{M}\tilde{N}} - \matr{E}\otimes\matr{\tilde{H}}
    \end{align*}
    and write
    \begin{align*}
        \matr{C}^{-1} - \matr{T}_C^F \Ptimat^{-1} \matr{T}_F^C = \left(\matr{M}_3 - \matr{M}_2\right)^{-1} - \matr{T}_C^F \left(\matr{\tilde{M}}_3 - \matr{\tilde{M}}_1\right)^{-1} \matr{T}_F^C.
    \end{align*}
    With~\eqref{eq:est_M3M1} the second term can be bounded by
    \begin{align*}
        \left\lVert\matr{T}_C^F \left(\matr{\tilde{M}}_3 - \matr{\tilde{M}}_1\right)^{-1} \matr{T}_F^C\right\rVert \le c_6\frac{1}{\mu}
    \end{align*}
    if $\mu$ is large enough.
    In the very same way we can also bound the first term, i.e.
    \begin{align*}
        \left\lVert\left(\matr{M}_3 - \matr{M}_2\right)^{-1} \matr{T}_F^C\right\rVert \le c_7\frac{1}{\mu},
    \end{align*}
    for $\mu$ large enough, so that 
    \begin{align*}
        \left\lVert\matr{C}^{-1} - \matr{T}_C^F \Ptimat^{-1} \matr{T}_F^C \matr{T}_F^C\right\rVert \le c_6\frac{1}{\mu} + c_7\frac{1}{\mu} \le c\frac{1}{\mu}.
    \end{align*}
\end{proof}

A natural question to ask is whether the smoother satisfies the smoothing property, which would make PFASST an actual multigrid algorithm in the classical sense.
However, this is not the case, as we can also observe numerically~\cite{doi:10.1002/nla.2110}.
Still, we can bound the norm of the iteration matrix.

\begin{lemma}\label{lem:smoo_prop_muinf}
    For the CFL number $\mu$ large enough and $k\in\mathbb{N}$ iterations, we have
    \begin{align*}
        \left\lVert\matr{C}\left(\matr{I}_{LMN} - \Phatmat^{-1}\matr{C}\right)^k\right\rVert \le \mu\left(c\left\lVert\matr{I}_L\otimes\left(\matr{I}_M-\matr{Q}_\Delta^{-1}\matr{Q}\right)^k\otimes\matr{I}_N\right\rVert + \mathcal{O}\left(\frac{1}{\mu}\right)\right),
    \end{align*}
    if $\matr{A}^{-1}$ exists and the LU trick is used for $\matr{Q}_\Delta$.
\end{lemma}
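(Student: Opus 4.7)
The plan is to combine the expansion of $\matr{T}_{\mathrm{S}}(\mu)$ already derived in Section~\ref{sec:smoo_toinf} with a crude bound $\|\matr{C}\|\le c_C\mu$ on the composite collocation operator. Since $\matr{I}_{LMN}-\Phatmat^{-1}\matr{C}=\matr{T}_{\mathrm{S}}(\mu)$ by definition, the quantity to be bounded is $\|\matr{C}(\matr{T}_{\mathrm{S}}(\mu))^k\|$. Using the LU trick and binomially expanding $(\matr{T}_{\mathrm{S}}(\infty)+\mathcal{O}(1/\mu))^k$, every cross term still contains at least one factor of norm $\mathcal{O}(1/\mu)$ while the other factors remain bounded, so
\begin{align*}
  (\matr{T}_{\mathrm{S}}(\mu))^k = \matr{I}_L\otimes\left(\matr{I}_M-\matr{Q}_\Delta^{-1}\matr{Q}\right)^k\otimes\matr{I}_N + \matr{R}(\mu),\qquad \|\matr{R}(\mu)\|\le c_R/\mu,
\end{align*}
for $\mu$ large enough.

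I would then split $\matr{C}(\matr{T}_{\mathrm{S}}(\mu))^k$ via the triangle inequality into a ``main'' piece $\matr{C}\cdot\matr{I}_L\otimes(\matr{I}_M-\matr{Q}_\Delta^{-1}\matr{Q})^k\otimes\matr{I}_N$ and a ``remainder'' piece $\matr{C}\matr{R}(\mu)$. For the remainder, the structure $\matr{C}=\matr{I}_{LMN}-\mu\matr{I}_L\otimes\matr{Q}\otimes\matr{A}-\matr{E}\otimes\matr{H}$ immediately gives $\|\matr{C}\|\le c_C\mu$ for $\mu$ large, whence $\|\matr{C}\matr{R}(\mu)\|\le c_Cc_R=\mu\cdot\mathcal{O}(1/\mu)$. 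This is exactly the error slot of the asserted inequality.

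For the main piece, I would expand $\matr{C}$ term by term and apply the mixed-product rule for Kronecker products. The $\mu$-summand of $\matr{C}$ produces $-\mu\matr{I}_L\otimes\matr{Q}(\matr{I}_M-\matr{Q}_\Delta^{-1}\matr{Q})^k\otimes\matr{A}$, whose norm (with the spectral norm, so that tensor factors multiply) is bounded by $\mu\|\matr{Q}\|\|\matr{A}\|\,\|\matr{I}_L\otimes(\matr{I}_M-\matr{Q}_\Delta^{-1}\matr{Q})^k\otimes\matr{I}_N\|$; this contributes the leading $c\mu\|\cdots\|$ term with $c=\|\matr{Q}\|\|\matr{A}\|$. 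The remaining summands from $\matr{C}$, namely $\matr{I}_{LMN}$ and $-\matr{E}\otimes\matr{N}\otimes\matr{I}_N$, pass through the tensor structure to yield bounded multiples of $\|\matr{I}_L\otimes(\matr{I}_M-\matr{Q}_\Delta^{-1}\matr{Q})^k\otimes\matr{I}_N\|$ with $\mu$-independent constants; rewriting each such constant as $\mu\cdot\mathcal{O}(1/\mu)$ absorbs these lower-order contributions into the $\mu\mathcal{O}(1/\mu)$ slot.

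The main obstacle is bookkeeping rather than analysis: one must verify that every summand arising from expanding $\matr{C}(\matr{T}_{\mathrm{S}}(\mu))^k$ either carries an explicit factor of $\mu$ accompanied by $\|\matr{I}_L\otimes(\matr{I}_M-\matr{Q}_\Delta^{-1}\matr{Q})^k\otimes\matr{I}_N\|$, in which case it feeds the leading term, or is uniformly bounded in $\mu$, in which case it is absorbed by $\mu\mathcal{O}(1/\mu)$. The assumption that $\matr{A}^{-1}$ exists enters only implicitly through the $\mathcal{O}(1/\mu)$ asymptotics of $\matr{T}_{\mathrm{S}}(\mu)$ established in Section~\ref{sec:smoo_toinf}; no new invertibility is required at this step.
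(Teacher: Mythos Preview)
Your proposal is correct and follows essentially the same route as the paper: use the expansion $(\matr{T}_{\mathrm{S}}(\mu))^k = \matr{I}_L\otimes(\matr{I}_M-\matr{Q}_\Delta^{-1}\matr{Q})^k\otimes\matr{I}_N + \mathcal{O}(1/\mu)$ from Section~\ref{sec:smoo_toinf} together with the bound $\|\matr{C}\|\le c\mu$ obtained by factoring $\mu$ out of $\matr{C}$. The only difference is that the paper does not bother with the term-by-term expansion of $\matr{C}$ against the main piece; it simply applies submultiplicativity once, $\|\matr{C}(\matr{T}_{\mathrm{S}}(\mu))^k\|\le\|\matr{C}\|\,\|(\matr{T}_{\mathrm{S}}(\mu))^k\|$, which already yields the stated inequality, so your extra Kronecker bookkeeping is unnecessary but harmless.
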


\begin{proof}
    Showing this is rather straightforward using the derivation of Lemma~\ref{lem:smoo_toinf} and by realizing that
    \begin{align*}
        \matr{C} = \mu\left(\frac{1}{\mu}\matr{I}_{LMN} - \matr{I}_L\otimes\matr{Q} \otimes \matr{A} - \frac{1}{\mu}\matr{E}\otimes\matr{H}\right)
    \end{align*}
    and therefore $\left\lVert\matr{C}\right\rVert  \le c_8\mu$ for $\mu$ large enough.
\end{proof}

At first glance this does look like the standard smoothing property for multigrid methods after all, where we would expect
\begin{align*}
    \left\lVert\matr{C}\left(\matr{T}_{\mathrm{S}}\left(\mu\right)\right)^k\right\rVert  = \left\lVert\matr{C}\left(\matr{I}_{LMN} - \Phatmat^{-1}\matr{C}\right)^k\right\rVert \le \mu g(k)
\end{align*}
with $g(k)\rightarrow 0$ for $k\rightarrow\infty$ independent of $\mu$.
In our case, however, $g(k) = \mathcal{O}\left(\frac{1}{\mu}\right)$ if $k$ is larger than the number of quadrature nodes $M$, see Lemma~\ref{lem:smoo_toinf}.
Even worse, this implies that 
\begin{align*}
    \left\lVert\matr{C}\left(\matr{I}_{LMN} - \Phatmat^{-1}\matr{C}\right)^k\right\rVert \le c
\end{align*}
for some constant $c>0$, if $k\ge M$, so that in this case the norm of the smoother does not even converge to zero, if $\mu$ goes to infinity. 
It is not even guaranteed that this constant $c$ is below 1.

\bigskip

However, we can couple Lemmas~\ref{lem:approx_prop_muinf} and~\ref{lem:smoo_prop_muinf} to bound the norm of the full iteration matrix of PFASST.

\begin{theorem}\label{th:norm_bound_PFASST}
    If Lemmas~\ref{lem:approx_prop_muinf} and~\ref{lem:smoo_prop_muinf} hold true, then the norm of the iteration matrix $\matr{T}_{\mathrm{PFASST}}\left(\mu,k\right)$ of PFASST with $k$ pre-smoothing steps can be bounded by
    \begin{align*}
        \left\lVert\matr{T}_{\mathrm{PFASST}}\left(\mu,k\right)\right\rVert \le c\left\lVert\matr{I}_L\otimes\left(\matr{I}_M-\matr{Q}_\Delta^{-1}\matr{Q}\right)^k\otimes\matr{I}_N\right\rVert + \mathcal{O}\left(\frac{1}{\mu}\right)
    \end{align*}
    so that for $k\ge M$ the norm of the iteration matrix goes to zero as $\mu\rightarrow \infty$.
\end{theorem}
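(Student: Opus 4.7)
The plan is to mimic the classical multigrid norm estimate of Hackbusch, where the error propagation operator of a two-grid method is split into the product of a coarse-grid-correction factor and a smoother factor, and then the two supplied lemmas play exactly the roles of the approximation and smoothing properties.

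First, I would write the iteration matrix for PFASST with $k$ pre-smoothing steps as
\begin{align*}
    \matr{T}_{\mathrm{PFASST}}(\mu,k) = \left(\matr{I}_{LMN} - \matr{T}_C^F \Ptimat^{-1} \matr{T}_F^C \matr{C}\right)\left(\matr{I}_{LMN} - \Phatmat^{-1}\matr{C}\right)^k,
\end{align*}
and factor the coarse-grid-correction block on the left as
\begin{align*}
    \matr{I}_{LMN} - \matr{T}_C^F \Ptimat^{-1} \matr{T}_F^C \matr{C} = \left(\matr{C}^{-1} - \matr{T}_C^F \Ptimat^{-1} \matr{T}_F^C\right)\matr{C}.
\end{align*}
Substituting this back yields
\begin{align*}
    \matr{T}_{\mathrm{PFASST}}(\mu,k) = \left(\matr{C}^{-1} - \matr{T}_C^F \Ptimat^{-1} \matr{T}_F^C\right)\matr{C}\left(\matr{I}_{LMN} - \Phatmat^{-1}\matr{C}\right)^k,
\end{align*}
which is exactly the decomposition expected from classical two-grid theory.

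Next, by submultiplicativity of the induced matrix norm, I would apply the two lemmas in turn. Lemma~\ref{lem:approx_prop_muinf} gives
\begin{align*}
    \left\lVert\matr{C}^{-1} - \matr{T}_C^F \Ptimat^{-1} \matr{T}_F^C\right\rVert \le \frac{\tilde c}{\mu},
\end{align*}
while Lemma~\ref{lem:smoo_prop_muinf} gives
\begin{align*}
    \left\lVert\matr{C}\left(\matr{I}_{LMN} - \Phatmat^{-1}\matr{C}\right)^k\right\rVert \le \mu\left(c'\left\lVert\matr{I}_L\otimes\left(\matr{I}_M-\matr{Q}_\Delta^{-1}\matr{Q}\right)^k\otimes\matr{I}_N\right\rVert + \mathcal{O}\!\left(\frac{1}{\mu}\right)\right).
\end{align*}
Multiplying the two bounds, the explicit $\mu$ and $1/\mu$ factors cancel, leaving
\begin{align*}
    \left\lVert\matr{T}_{\mathrm{PFASST}}(\mu,k)\right\rVert \le c\left\lVert\matr{I}_L\otimes\left(\matr{I}_M-\matr{Q}_\Delta^{-1}\matr{Q}\right)^k\otimes\matr{I}_N\right\rVert + \mathcal{O}\!\left(\frac{1}{\mu}\right),
\end{align*}
which is exactly the claimed inequality.

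Finally, I would observe that for the LU trick $\matr{I}_M-\matr{Q}_\Delta^{-1}\matr{Q} = \matr{I}_M - \matr{L}^T$ is strictly upper triangular and therefore nilpotent of index $M$, as already used in the proof of Lemma~\ref{lem:smoo_toinf}. Hence for $k \ge M$ the leading term vanishes, and the bound collapses to $\mathcal{O}(1/\mu)$, so that $\lVert\matr{T}_{\mathrm{PFASST}}(\mu,k)\rVert \to 0$ as $\mu \to \infty$. I expect the only minor subtlety to be ensuring that the invertibility of $\matr{C}$ used in the factorization is in place, which follows from the assumed invertibility of $\matr{A}$, $\matr{\tilde A}$, $\matr{Q}_\Delta$ and $\matr{\tilde Q}_\Delta$ already required by the two lemmas; the main obstacle (controlling $\lVert\matr{C}(\matr{I} - \Phatmat^{-1}\matr{C})^k\rVert$) has already been handled by Lemma~\ref{lem:smoo_prop_muinf}, so here it reduces to bookkeeping.
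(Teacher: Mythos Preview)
Your proof is correct and follows exactly the approach the paper intends: factor $\matr{T}_{\mathrm{PFASST}}(\mu,k) = (\matr{C}^{-1} - \matr{T}_C^F \Ptimat^{-1} \matr{T}_F^C)\,\matr{C}(\matr{I} - \Phatmat^{-1}\matr{C})^k$, apply submultiplicativity, and let the $1/\mu$ from Lemma~\ref{lem:approx_prop_muinf} cancel the $\mu$ from Lemma~\ref{lem:smoo_prop_muinf}. The paper merely calls this ``straightforward'' and remarks that pre- versus post-smoothing is immaterial in the norm; you have spelled out precisely the computation that appears explicitly in the paper's proof of the companion Theorem~\ref{th:norm_bound_PFASST_smoo}.
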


\begin{proof}
    The proof is straightforward, but we note that we used pre-smoothing here instead of post-smoothing as in Theorem~\ref{th:pfasst_in_matrix_form}, which in the norm does not matter.
\end{proof}

We see that this gives nearly $\mu$-independent convergence of PFASST as for classical multigrid methods.
The last term in the bound becomes smaller and smaller when $\mu$ becomes larger and larger so that at least asymptotically convergence speed is increased for $\mu\rightarrow\infty$.

\bigskip

Thus, the only fundamental difference between PFASST and classical linear multigrid methods lies in the smoothing property.
For the standard approximative block Jacobi method, this does not seem to hold.
Yet, there is another approach which helps us here, namely Reusken's Lemma~\cite{reusken1991new,braess2001finite}.
To apply this, we define the modified approximative block Jacobi preconditioner by
\begin{align*}
  \Phatmat_\omega = \matr{I}_{LMN} - \mu \matr{I}_L\otimes\omega\matr{Q}_\Delta \otimes \matr{A} 
\end{align*}
and the corresponding iteration matrix $\matr{T}_\omega$ by
\begin{align*}
   \matr{T}_\omega = \matr{I}_{LMN} - \Phatmat^{-1}_\omega\matr{C}
\end{align*}
for some parameter $\omega>0$.

\begin{lemma}\label{lem:smoo_prop_muinf_mod}
  We assume that the number of quadrature nodes $M$ is small enough, $\matr{Q}_\Delta$ is given by the LU trick, $\matr{A}$ is invertible and $\mu$ is large enough. Then the iteration matrix $\matr{T}_2$ satisfies the smoothing property. More precisely, we have
  \begin{align*}
    \left\lVert\matr{C}\left(\matr{I}_{LMN} - \Phatmat^{-1}_2\matr{C}\right)^k\right\rVert \le c\sqrt{\frac{8}{k\pi}}\mu
  \end{align*}
  for $k$ smoothing steps, $\mu$ large enough and $M\le M^*$, where $M^*$ depends on the matrix norm.
\end{lemma}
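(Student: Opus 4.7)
The plan is to recognize the statement as a direct application of Reusken's Lemma~\cite{reusken1991new,braess2001finite} to the iteration matrix $\matr{T}_2 = \matr{I}_{LMN}-\Phatmat_2^{-1}\matr{C}$, combined with the norm estimate on $\matr{C}$ developed in Lemma~\ref{lem:smoo_prop_muinf}. The starting point is the factorization
\begin{align*}
\matr{C}\bigl(\matr{I}_{LMN}-\Phatmat_2^{-1}\matr{C}\bigr)^k
= \Phatmat_2(\matr{I}_{LMN}-\matr{T}_2)\matr{T}_2^k,
\end{align*}
which splits the quantity of interest into $\lVert\Phatmat_2\rVert\cdot\lVert(\matr{I}_{LMN}-\matr{T}_2)\matr{T}_2^k\rVert$.

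The first factor is bounded by $c_1\mu$ for $\mu$ large enough, by repeating the estimate $\lVert\matr{C}\rVert\le c_8\mu$ from Lemma~\ref{lem:smoo_prop_muinf}: the definition of $\Phatmat_\omega$ differs from $\matr{C}$ only by the missing $\matr{E}\otimes\matr{H}$ term and by a factor $\omega=2$ in front of $\matr{Q}_\Delta$, both of which are absorbed into the constant. Reusken's Lemma then provides the second factor: whenever $\lVert\matr{T}_2\rVert\le 1$, one has
\begin{align*}
\bigl\lVert(\matr{I}_{LMN}-\matr{T}_2)\matr{T}_2^k\bigr\rVert \le \sqrt{\tfrac{8}{\pi(2k+1)}} \le \sqrt{\tfrac{8}{k\pi}},
\end{align*}
which is precisely the Stirling-type estimate on $\binom{k}{\lfloor k/2\rfloor}/2^k$. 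Multiplying the two bounds yields the claim, so everything reduces to verifying that $\lVert\matr{T}_2\rVert\le 1$.

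To check the hypothesis, I would repeat the stiff-limit derivation of Section~\ref{sec:smoo_toinf} with $\matr{Q}_\Delta$ replaced by $2\matr{Q}_\Delta$. This yields
\begin{align*}
\matr{T}_2(\mu) = \matr{I}_L\otimes\bigl(\matr{I}_M - \tfrac{1}{2}\matr{Q}_\Delta^{-1}\matr{Q}\bigr)\otimes\matr{I}_N + \mathcal{O}(1/\mu).
\end{align*}
With the LU trick, $\matr{Q}_\Delta^{-1}\matr{Q}=\matr{L}^T$ is unit upper triangular, so splitting $\matr{L}^T = \matr{I}_M + \matr{R}$ with strictly upper-triangular $\matr{R}$ gives $\matr{T}_2(\infty) = \matr{I}_L\otimes\tfrac{1}{2}(\matr{I}_M-\matr{R})\otimes\matr{I}_N$, whose norm is $\tfrac{1}{2}\lVert\matr{I}_M - \matr{R}\rVert$. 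For $M=1$ we have $\matr{R}=0$ and this norm equals $\tfrac12$; since the off-diagonal entries of $\matr{L}^T$ grow with $M$ and depend continuously on the Gau\ss--Radau nodes, there is some threshold $M^\ast$ (depending on the chosen induced matrix norm) below which $\lVert\matr{T}_2(\infty)\rVert<1$, and then the $\mathcal{O}(1/\mu)$ remainder forces $\lVert\matr{T}_2(\mu)\rVert\le 1$ for $\mu$ large enough.

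The main obstacle is the sharp identification of $M^\ast$: the entries of $\matr{L}^T$ come from the LU decomposition of $\matr{Q}^T$ for Gau\ss--Radau quadrature and depend on the nodes in a way that admits no clean closed form, so different induced norms give different admissible ranges of $M$. This is exactly why the lemma is stated as the qualitative bound $M\le M^\ast$ rather than with an explicit threshold, and this also makes clear why the statement cannot be upgraded into a fully $M$-uniform smoothing property.
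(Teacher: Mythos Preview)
Your overall strategy matches the paper's: invoke Reusken's Lemma for the damped iteration $\matr{T}_2 = \matr{I}_{LMN} - \Phatmat_2^{-1}\matr{C}$, bound $\lVert\Phatmat_2\rVert \le c\mu$ for large $\mu$, and verify the hypothesis of the lemma through the stiff-limit expansion. The factorization $\matr{C}\matr{T}_2^k = \Phatmat_2(\matr{I}-\matr{T}_2)\matr{T}_2^k$ is fine.

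There is, however, a genuine gap in how you state and check Reusken's Lemma. The non-symmetric form of the lemma requires the averaged structure $\matr{T} = \tfrac{1}{2}(\matr{I}+\matr{B})$ together with $\lVert\matr{B}\rVert \le 1$; it does \emph{not} follow from $\lVert\matr{T}\rVert \le 1$ alone. A one-dimensional counterexample already shows this: for $\matr{T} = -1$ one has $\lVert\matr{T}\rVert = 1$, yet $(1-\matr{T})\matr{T}^k = 2(-1)^k$ does not decay. In the present setting $\matr{B}_2 = 2\matr{T}_2 - \matr{I} = \matr{I}_{LMN} - 2\Phatmat_2^{-1}\matr{C}$, and its stiff limit is
\[
\matr{B}_2(\infty) = \matr{I}_L\otimes\bigl(\matr{I}_M - \matr{Q}_\Delta^{-1}\matr{Q}\bigr)\otimes\matr{I}_N = \matr{I}_L\otimes(-\matr{R})\otimes\matr{I}_N
\]
in your notation. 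The correct condition to verify is therefore $\lVert\matr{R}\rVert < 1$, not $\tfrac{1}{2}\lVert\matr{I}_M-\matr{R}\rVert < 1$. These are not equivalent in general: in the $2$-norm, for instance, $\lVert\matr{I}_M-\matr{R}\rVert_2 < 2$ is strictly weaker than $\lVert\matr{R}\rVert_2 < 1$, so your check could pass while the actual hypothesis of Reusken's Lemma fails. This is exactly why the paper computes $\lVert\matr{I}_M - \matr{L}^T\rVert$ (i.e.\ $\lVert\matr{B}_2(\infty)\rVert$) numerically and finds it below $1$ only for $M\le 5$ in the $\infty$-norm and $M\le 7$ in the $2$-norm. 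Once you replace the hypothesis and its verification by the condition on $\matr{B}_2$, your argument coincides with the paper's.
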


\begin{proof}
  The basis for this proof as well as for the choice of the parameter $\omega$ is Reusken's Lemma, stating that for some invertible matrix $\matr{P}$ and an iteration matrix $\matr{T} = \frac{1}{2}(\matr{I} + \matr{B})$ with $\matr{B} = \matr{I} - \matr{P}^{-1}\matr{C}$ it is
  \begin{align*}
    \left\lVert\matr{C}\matr{T}^k\right\rVert \le \sqrt{\frac{8}{k\pi}}\left\lVert\matr{P}\right\rVert
  \end{align*}
  if $\left\lVert\matr{B}\right\rVert \le 1$.
  In our case we have
  \begin{align*}
    \matr{B} = \matr{B}_\omega = \matr{I} - 2\Phatmat^{-1}_\omega\matr{C}\quad \text{and}\quad \matr{P} = \matr{P}_\omega = \frac{1}{2}\Phatmat_\omega
  \end{align*}
  and we write
  \begin{align*}
    \matr{B}_\omega = \matr{B}_\omega\left(\mu\right) = \matr{B}_\omega\left(\infty\right) + \mathcal{O}\left(\frac{1}{\mu}\right) = \matr{I}_L\otimes\left(\matr{I}_M-\frac{2}{\omega}\matr{Q}_\Delta^{-1}\matr{Q}\right)\otimes\matr{I}_N + \mathcal{O}\left(\frac{1}{\mu}\right),
  \end{align*}
  with $\matr{B}_\omega(\infty)$ analogously derived as $\matr{T}_{\mathrm{S}}(\infty)$ in Section~\ref{sec:smoo_toinf}.
  Now, bounding the norm of $\matr{B}_\omega(\infty)$ is by far not straightforward and we fall back on numerical calculations to find scenarios where $\left\lVert\matr{B}_\omega(\infty)\right\rVert <1$.
  In particular, if we choose $\omega = 2$, $\left\lVert.\right\rVert$ as the infinity-norm and $M\le5$, then $\left\lVert\matr{B}_\omega(\infty)\right\rVert \approx 0.8676$.
  In the 2-norm, $M=6,7$ is also allowed and choosing $\omega>2$ extends this range further.
  Anyway, we are able to bound $\left\lVert\matr{B}_2(\infty)\right\rVert$ and thus $\left\lVert\matr{B}_2\right\rVert$ by one, provided we have chosen the parameters carefully.
  Then, Reusken's Lemma is applicable and we have
  \begin{align*}
    \left\lVert\matr{C}\left(\matr{I}_{LMN} - \Phatmat^{-1}_2\matr{C}\right)^k\right\rVert \le \sqrt{\frac{8}{k\pi}}\left\lVert\matr{P}_2\right\rVert.
  \end{align*}
  Now, the norm of $\matr{P}_2$ can be bounded by 
  \begin{align*}
    \left\lVert\matr{P}_2\right\rVert &= \left\lVert\frac{1}{2}\left(\matr{I}_{LMN} - \mu \matr{I}_L\otimes2\matr{Q}_\Delta \otimes \matr{A}\right)\right\rVert
    \le \mu\left\lVert\frac{1}{2\mu}\matr{I}_{LMN} - \matr{I}_L\otimes\matr{Q}_\Delta \otimes \matr{A}\right\rVert \le c\mu
  \end{align*}
  if $\mu\ge\mu^*$ for some $\mu^*$, which concludes the proof.
\end{proof}

Although the assumptions in Lemma~\ref{lem:smoo_prop_muinf_mod} are more restrictive that those of Lemma~\ref{lem:smoo_prop_muinf}, we now have an algorithm satifying both smoothing and approximation property, i.e.~a multigrid algorithm with $\mu$-independent convergence.

\begin{theorem}\label{th:norm_bound_PFASST_smoo}
  If Lemmas~\ref{lem:approx_prop_muinf} and~\ref{lem:smoo_prop_muinf_mod} hold true, then the norm of the iteration matrix $\matr{T}_{\mathrm{PFASST}}\left(\mu,k\right)$ of PFASST with $k$ modified approximative block Jacobi pre-smoothing steps can be bounded by
  \begin{align*}
      \left\lVert\matr{T}_{\mathrm{PFASST}}\left(\mu,k\right)\right\rVert \le g(k)
  \end{align*}
  with $g(k) = ck^{-\frac{1}{2}}\rightarrow 0$ as $k\rightarrow\infty$, independently of $\mu$.
\end{theorem}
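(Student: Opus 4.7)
The plan is to follow the classical Hackbusch-style multigrid convergence argument, namely that once one has both the smoothing property and the approximation property, the norm bound for the two-grid iteration matrix falls out of a single product inequality. The key algebraic manipulation is to factor the coarse-grid correction in a way that exposes the quantity estimated by the approximation property.

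First, I would write the pre-smoothing iteration matrix of PFASST as
\begin{align*}
  \matr{T}_{\mathrm{PFASST}}(\mu,k) = \matr{T}_{\mathrm{CGC}}(\mu)\bigl(\matr{I}_{LMN} - \Phatmat_2^{-1}\matr{C}\bigr)^k = \bigl(\matr{I}_{LMN} - \matr{T}_C^F\Ptimat^{-1}\matr{T}_F^C\matr{C}\bigr)\bigl(\matr{I}_{LMN} - \Phatmat_2^{-1}\matr{C}\bigr)^k,
\end{align*}
and then factor $\matr{C}$ out of the coarse-grid correction by the identity
\begin{align*}
  \matr{I}_{LMN} - \matr{T}_C^F\Ptimat^{-1}\matr{T}_F^C\matr{C} = \bigl(\matr{C}^{-1} - \matr{T}_C^F\Ptimat^{-1}\matr{T}_F^C\bigr)\matr{C},
\end{align*}
which is valid since $\matr{C}$ is invertible (its inverse is what we are approximating). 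This yields
\begin{align*}
  \matr{T}_{\mathrm{PFASST}}(\mu,k) = \bigl(\matr{C}^{-1} - \matr{T}_C^F\Ptimat^{-1}\matr{T}_F^C\bigr)\,\matr{C}\bigl(\matr{I}_{LMN} - \Phatmat_2^{-1}\matr{C}\bigr)^k,
\end{align*}
so that the two factors are exactly the objects controlled by the approximation property and the (modified) smoothing property.

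Next I would apply submultiplicativity of the induced matrix norm and insert the two bounds. By Lemma~\ref{lem:approx_prop_muinf}, for $\mu$ large enough,
\begin{align*}
  \bigl\lVert\matr{C}^{-1} - \matr{T}_C^F\Ptimat^{-1}\matr{T}_F^C\bigr\rVert \le c_1\frac{1}{\mu},
\end{align*}
and by Lemma~\ref{lem:smoo_prop_muinf_mod}, under the stated restrictions on $M$ and on $\matr{Q}_\Delta$,
\begin{align*}
  \Bigl\lVert\matr{C}\bigl(\matr{I}_{LMN} - \Phatmat_2^{-1}\matr{C}\bigr)^k\Bigr\rVert \le c_2\sqrt{\frac{8}{k\pi}}\,\mu.
\end{align*}
Multiplying the two bounds, the factors of $\mu$ and $1/\mu$ cancel, leaving
\begin{align*}
  \bigl\lVert\matr{T}_{\mathrm{PFASST}}(\mu,k)\bigr\rVert \le c_1 c_2\sqrt{\frac{8}{k\pi}} =: g(k),
\end{align*}
with $g(k) = c k^{-1/2}\to 0$ as $k\to\infty$, independently of $\mu$. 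This is the asserted estimate.

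There is essentially no hard step here: all the work has been done in proving the smoothing and approximation properties. The only point worth commenting on, as the companion Theorem~\ref{th:norm_bound_PFASST} already noted, is that we have placed the smoother on the right (pre-smoothing) rather than on the left as in Theorem~\ref{th:pfasst_in_matrix_form}. Since we only bound a norm and not a spectrum, and since both arrangements are similar by the same pre/post swap, this is a cosmetic change; the same factorization argument goes through unchanged. The conceptual content of the result is the cancellation of $\mu$ between the two properties, which is exactly the textbook mechanism by which a smoothing-plus-approximation pair produces a $\mu$-independent multigrid contraction.
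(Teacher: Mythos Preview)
Your proof is correct and follows essentially the same route as the paper: factor $\matr{T}_{\mathrm{CGC}}$ as $(\matr{C}^{-1}-\matr{T}_C^F\Ptimat^{-1}\matr{T}_F^C)\matr{C}$, apply submultiplicativity, and let the $\mu$-factors from Lemmas~\ref{lem:approx_prop_muinf} and~\ref{lem:smoo_prop_muinf_mod} cancel. The paper's proof is in fact terser than yours, and it makes the same remark about pre- versus post-smoothing.
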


\begin{proof}
  This immediately follows from Lemmas~\ref{lem:approx_prop_muinf} and~\ref{lem:smoo_prop_muinf_mod} by writing
  \begin{align*}
    \left\lVert\matr{T}_{\mathrm{PFASST}}\left(\mu,k\right)\right\rVert &= \left\lVert\left(\matr{C}^{-1} - \matr{T}_C^F \Ptimat^{-1} \matr{T}_F^C\right)\matr{C}\left(\matr{I}_{LMN} - \Phatmat^{-1}_2\matr{C}\right)\right\rVert\\
    &\le \left\lVert(\matr{C}^{-1} - \matr{T}_C^F \Ptimat^{-1} \matr{T}_F^C\right\rVert\left\lVert\matr{C}\left(\matr{I}_{LMN} - \Phatmat^{-1}_2\matr{C}\right)\right\rVert\\
    &\le c\frac{1}{\mu}\cdot\sqrt{\frac{8}{k\pi}}\mu = c\cdot\sqrt{\frac{8}{k\pi}}
  \end{align*}
  Note again that we used pre- instead of post-smoothing here to stay consistent with the standard multigrid literature.
\end{proof}

While the theoretical estimate of this theorem is much more convenient than the one of Theorem~\ref{th:norm_bound_PFASST}, practical implementations do not share this preference. 
In all tests we have done so far, using the damping factor of $2$ (or any other factor other than $1$) yields much worse convergence rates for PFASST, see also Section~\ref{sec:summary}.
We also note that the same result could have been obtained by using classical damping, i.e.~by using 
\begin{align*}
  \omega\Phatmat = \omega\left(\matr{I}_{LMN} - \mu \matr{I}_L\otimes\matr{Q}_\Delta \otimes \matr{A}\right),
\end{align*}
because the limit matrix $\matr{B}_\omega(\infty)$ is the same for both approaches.
Yet, the convergence results are even worse for this choice.

As a conclusion, the question of whether or not to use damping for the smoother in PFASST has two answers: yes, if PFASST should be a real multigrid solver and no, if PFASST should be a fast (multigrid-like) solver.

\section{Increasing the number of time-steps}
 
This last scenario, where the number of time-steps is going to infinity, is actually twofold: (1) the time interval is fixed and (2) the time interval increases with the number of time-steps $L$. 
In the first case, a fixed interval $[0,T]$ of length $T$ is divided into more and more time-steps, so that this is a special case of $\mu\rightarrow0$: here, the time-step size $\dt$ is going to $0$ as the number of time-steps goes to infinity. 
The second case, in contrast, keeps $\mu$ constant, since neither $\dt$ nor any other parameter is adapted. 
Solely the number of time-steps and therefore the length of the time interval under consideration is increasing.
Yet, for both cases the dimensions of all matrices change with $L$ and we make use of their periodic stencils in the sense of \cite{BoltenRittich2017} to find bounds for their spectral radii. In particular, Lemma A.2 of \cite{BoltenRittich2017} states that the spectral radius of an infinite block Toeplitz matrix $\matr{A}$ is equal to the essential supremum of the matrix-valued generating symbol of $\matr{A}$, providing the limit that is needed in the following.

\subsection{Fixed time interval}

For the first scenario, we again make use of the perturbation results in Lemma~\ref{lem:smoother_to0} and Theorem~\ref{th:conv_pfasst_to0}.
Following Eq.~\eqref{eq:smoo_perturbed}, we write
\begin{align*}
  \lim_{L\rightarrow\infty}\matr{T}_{\mathrm{S}}(\mu) = \lim_{L\rightarrow\infty}\matr{T}_{\mathrm{S}}(0)+ \lim_{L\rightarrow\infty}\mu\matr{D}
\end{align*}
for some perturbation matrix $\matr{D}\in\mathbb{R}^{LMN\times LMN}$, which is bounded for $\mu$ small enough (i.e.~for $L$ large enough), see the discussion leading to~\eqref{eq:smoo_perturbed}.
Now, this matrix $\matr{D}$ is bounded for all $L$, so that because $\mu\rightarrow0$ as $L\rightarrow\infty$, we have 
\begin{align*}
  \lim_{L\rightarrow\infty}\matr{T}_{\mathrm{S}}(\mu) = \lim_{L\rightarrow\infty}\matr{T}_{\mathrm{S}}(0)
\end{align*}
The matrix $\matr{T}_{\mathrm{S}}(0) = \matr{E}\otimes\matr{H}$ is a block Toeplitz matrix with periodic stencil and symbol
\begin{align*}
  \widehat{\matr{T}_{\mathrm{S}}(0)}(x) = e^{-ix}\matr{H}
\end{align*}
in the sense of \cite{BoltenRittich2017}.

Then, we have
\begin{align*}
  \rho\left(\lim_{L\rightarrow\infty}\matr{T}_{\mathrm{S}}(0)\right) = \sup_{x\in[-\pi,\pi]}\rho\left(e^{-ix}\matr{H}\right) = 1,
\end{align*}
since the eigenvalues of $e^{-ix}\matr{H}$ are $(M-1)N$-times $0$ and $M$-times $e^{-ix}$, see Theorem~\ref{th:pfasst_in_matrix_form}.
Thus, the spectral radius of $\matr{T}_{\mathrm{S}}(\mu)$ converges to $1$ for $L\rightarrow\infty$. 
Therefore, in this limit the smoother does not converge, or, more precisely, the smoother will converge slower and slower the larger $L$ becomes.
This is because for finite matrices, the spectral radius of the symbol serves as upper limit, so that the spectral radius of the iteration matrix converges to $1$ from below.
Also, this confirms the heuristic extension of Lemma~\ref{lem:smoother_to0} to infinite-sized operators, where the upper limit of the spectral radius goes to $1$ for $L\rightarrow\infty$, too.

\bigskip

For the iteration matrix $\matr{T}_{\mathrm{CGC}}$ of the coarse-grid correction, the limit matrix $\matr{T}_{\mathrm{CGC}}(0)$ is already block-diagonal, see Section~\ref{ssec:cgc_to0}.
We have also seen that the spectral radius of this matrix is at least $1$ and due to the block-diagonal structure, this does not change for $L\rightarrow\infty$.
Thus, not surprisingly, also the coarse-grid correction does not converge for $L\rightarrow\infty$.

\bigskip

Now, it seems obvious that PFASST itself will not converge, since both components alone fail to do so.
The next theorem shows that this is indeed the case, but the proof is slightly more involved.

\begin{theorem}
  For a fixed time interval with $L$ time steps and CFL number $\mu = \mu(L)$, the iteration matrix of PFASST satisfies
  \begin{align*}
    \rho\left(\lim_{L\rightarrow\infty}\matr{T}(\mu)\right) \ge 1.
  \end{align*}
\end{theorem}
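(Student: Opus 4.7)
The plan is to invoke the perturbation expansion already derived for $\matr{T}(\mu)$ in the non-stiff analysis, pass to the infinite block Toeplitz limit via Lemma~A.2 of \cite{BoltenRittich2017}, and then exhibit an explicit eigenvector that pins the spectral radius of the limit symbol to at least~$1$.

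Concretely, the computation preceding Theorem~\ref{th:conv_pfasst_to0} gives
\begin{align*}
  \matr{T}(\mu) &= \matr{E}\otimes\matr{M} + \mathcal{O}(\mu), \\
  \matr{M} &:= \matr{H}\!\left(\matr{I}_{MN} - \matr{T}_{C,Q}^F\matr{T}_{F,Q}^C\otimes\matr{T}_{C,A}^F\matr{T}_{F,A}^C\right),
\end{align*}
and in the fixed-interval scenario $\mu = \mu(L)\to 0$ as $L\to\infty$, so the perturbation vanishes in the limit and $\lim_{L\to\infty}\matr{T}(\mu)=\lim_{L\to\infty}\matr{E}\otimes\matr{M}$. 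The right-hand side is a block Toeplitz operator with periodic stencil having a single non-zero block $\matr{M}$ on the first subdiagonal, so its matrix-valued symbol is $\widehat{T}(x) = e^{-ix}\matr{M}$ and Lemma~A.2 of \cite{BoltenRittich2017} yields
\begin{align*}
  \rho\!\left(\lim_{L\to\infty}\matr{E}\otimes\matr{M}\right) \;=\; \operatorname*{ess\,sup}_{x\in[-\pi,\pi]}\rho\!\left(e^{-ix}\matr{M}\right) \;=\; \rho(\matr{M}),
\end{align*}
reducing the claim to the $L$-independent inequality $\rho(\matr{M})\ge 1$.

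For that last step I would produce a $1$-eigenvector of $\matr{M}$ by hand. Since spatial coarsening strictly reduces the number of degrees of freedom, there is a nonzero $w\in\ker(\matr{T}_{C,A}^F\matr{T}_{F,A}^C)$; setting $v=\mathbf{1}_M\otimes w$ and using that Lagrangian quadrature transfers preserve constants ($\matr{T}_{C,Q}^F\matr{T}_{F,Q}^C\mathbf{1}_M=\mathbf{1}_M$) together with $\matr{N}\mathbf{1}_M=\mathbf{1}_M$ yields $(\matr{T}_{C,Q}^F\matr{T}_{F,Q}^C\otimes\matr{T}_{C,A}^F\matr{T}_{F,A}^C)v=0$ and $\matr{H}v=v$, so $\matr{M}v=v$. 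The step I expect to be the main obstacle is the uniformity of the $\mathcal{O}(\mu)$ remainder in $L$: the estimate~\eqref{eq:cgc_mu_bound_to0} on $\lVert\Ptimat^{-1}\rVert$ is phrased for a fixed matrix, but the coarse Gauss--Seidel preconditioner grows with $L$ and its inverse is a serial forward sweep whose norm could, a priori, scale with $L$. Presumably one absorbs this by applying the same block Toeplitz symbol calculus to $\Ptimat$ itself and controlling $\lVert\Ptimat^{-1}\rVert$ through the essential infimum of the singular values of its symbol; once that uniformity is secured, the remaining two steps are essentially routine.
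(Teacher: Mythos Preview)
Your approach is essentially the paper's: pass to $\matr{T}(0)=\matr{E}\otimes\matr{M}$ via the perturbation expansion, compute the block Toeplitz symbol $e^{-ix}\matr{M}$ using \cite{BoltenRittich2017}, and then show $\rho(\matr{M})\ge 1$ by exploiting the nontrivial kernel of the spatial transfer product $\matr{T}_{C,A}^F\matr{T}_{F,A}^C$.

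The only visible difference is in that last step. The paper does not write down an eigenvector; instead it expands $\matr{M}=\matr{N}\otimes\matr{I}_N-\matr{N}\matr{T}_{C,Q}^F\matr{T}_{F,Q}^C\otimes\matr{T}_{C,A}^F\matr{T}_{F,A}^C$, notes that every row of $\matr{N}\matr{T}_{C,Q}^F\matr{T}_{F,Q}^C$ equals the last row $(t_{M,1},\ldots,t_{M,M})$ of $\matr{T}_{C,Q}^F\matr{T}_{F,Q}^C$, and uses the rank-one block structure to identify the nonzero eigenvalues of $\matr{M}$ with those of $\matr{K}=\matr{I}_N-\bigl(\sum_m t_{M,m}\bigr)\matr{T}_{C,A}^F\matr{T}_{F,A}^C$, half of which are $1$. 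Your explicit eigenvector $v=\mathbf{1}_M\otimes w$ with $w\in\ker(\matr{T}_{C,A}^F\matr{T}_{F,A}^C)$ reaches the same conclusion more directly; incidentally, you do not need the hypothesis $\matr{T}_{C,Q}^F\matr{T}_{F,Q}^C\mathbf{1}_M=\mathbf{1}_M$, since $\matr{T}_{C,A}^F\matr{T}_{F,A}^C w=0$ already annihilates the tensor product regardless of the quadrature factor.

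Regarding the uniformity-in-$L$ obstacle you flag: your instinct is correct that $\lVert\Ptimat^{-1}\rVert$ can grow with $L$ (indeed $(\matr{I}-\matr{E}\otimes\tilde{\matr{H}})^{-1}=\matr{I}+\sum_{k=1}^{L-1}\matr{E}^k\otimes\tilde{\matr{H}}$ has $\infty$-norm $L$), so with $\mu\propto 1/L$ the naive norm bound on the remainder need not vanish. The paper does not address this point either---it simply writes ``as before'' and passes to the limit. Your proposed remedy, namely applying the symbol calculus directly to the full $\matr{T}(\mu)$ so that $\Ptimat^{-1}$ is replaced by the inverse of its fixed-size symbol $\matr{I}-\mu\tilde{\matr{Q}}_\Delta\otimes\tilde{\matr{A}}-e^{-ix}\tilde{\matr{H}}$, is the natural way to close this gap, after which one lets $\mu\to 0$ in a matrix of fixed dimension.
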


\begin{proof}
  The full iteration matrix $\matr{T}(\mu)$ of PFASST can be written as 
  \begin{align*}
      \matr{T}(\mu) = \matr{T}(0) + \mathcal{O}(\mu) =\matr{E}\otimes\left(\matr{H}\left(\matr{I}_{MN}-\matr{T}_{C,Q}^F\matr{T}_{F,Q}^C\otimes\matr{T}_{C,A}^F\matr{T}_{F,A}^C\right)\right) + \mathcal{O}(\mu)
  \end{align*}
  see the discussion leading to Theorem~\ref{th:conv_pfasst_to0}.
  As before, this yields
  \begin{align*}
    \rho\left(\lim_{L\rightarrow\infty}\matr{T}(\mu)\right) = \rho\left(\lim_{L\rightarrow\infty}\matr{T}(0)\right) = \sup_{x\in[-\pi,\pi]}\rho\left(\widehat{\matr{T}(0)}(x)\right),
  \end{align*}
  following \cite{BoltenRittich2017}.

  Now, the symbol of the limit matrix $\matr{T}(0)$ is given by
  \begin{align*}
    \widehat{\matr{T}(0)}(x) = e^{-ix}\matr{H}\left(\matr{I}_{MN}-\matr{T}_{C,Q}^F\matr{T}_{F,Q}^C\otimes\matr{T}_{C,A}^F\matr{T}_{F,A}^C\right),
  \end{align*}
  which makes the computation of the eigenvalues slightly more intricate.
  Using Theorem~\ref{th:pfasst_in_matrix_form}, we write
  \begin{align*}
    \matr{H}\left(\matr{I}_{MN}-\matr{T}_{C,Q}^F\matr{T}_{F,Q}^C\otimes\matr{T}_{C,A}^F\matr{T}_{F,A}^C\right) = \matr{N}\otimes \matr{I}_N - \matr{N}\matr{T}_{C,Q}^F\matr{T}_{F,Q}^C\otimes\matr{T}_{C,A}^F\matr{T}_{F,A}^C
  \end{align*}
  and note that
  \begin{align*}
    \matr{N}\matr{T}_{C,Q}^F\matr{T}_{F,Q}^C = 
    \begin{pmatrix}
      t_{M,1} & ... & t_{M,M}\\
      \vdots &  & \vdots\\
      t_{M,1} & ... & t_{M,M},
    \end{pmatrix},
  \end{align*}
  where $t_{i,j}$ are the entries of the matrix $\matr{T}_{C,Q}^F\matr{T}_{F,Q}^C$.
  Thus, we have
  \begin{align*}
    \matr{N}\otimes \matr{I}_N - \matr{N}\matr{T}_{C,Q}^F\matr{T}_{F,Q}^C\otimes\matr{T}_{C,A}^F\matr{T}_{F,A}^C = 
    \begin{pmatrix}
      -t_{M,1}\matr{T}_{C,A}^F\matr{T}_{F,A}^C & ... &  \matr{I}_N - t_{M,M}\matr{T}_{C,A}^F\matr{T}_{F,A}^C\\
      \vdots &  & \vdots\\
      -t_{M,1}\matr{T}_{C,A}^F\matr{T}_{F,A}^C & ... & \matr{I}_N - t_{M,M}\matr{T}_{C,A}^F\matr{T}_{F,A}^C,
    \end{pmatrix}
  \end{align*}
  and the eigenvalues of this $MN\times MN$ matrix are all zero except for $N$ eigenvalues $\lambda_n$ given by the eigenvalues of
  \begin{align*}
    \matr{K} &= -\sum_{m=1}^{M-1}t_{M,m}\matr{T}_{C,A}^F\matr{T}_{F,A}^C + \matr{I}_N - t_{M,M}\matr{T}_{C,A}^F\matr{T}_{F,A}^C\\
    &= \matr{I}_N - \sum_{m=1}^{M}t_{M,m}\matr{T}_{C,A}^F\matr{T}_{F,A}^C
    = \matr{I}_N - c \matr{T}_{C,A}^F\matr{T}_{F,A}^C
  \end{align*}
  for a constant $c$ representing the sum over all $t_{M,m}$. 
  This holds since for a rank-1 matrix $\matr{B} = uv^T$ all eigenvalues are $0$ except for the eigenvalue $v^Tu$.
  In Section~\ref{ssec:cgc_to0} we already discussed that for standard Lagrangian interpolation and restriction half of the eigenvalues of multiplications like $\matr{T}_{C,A}^F\matr{T}_{F,A}^C$ are zero.
  Thus, half of the eigenvalues of $\matr{K}$ are one, so that half of the eigenvalues $\lambda_n$ are one.
  Therefore, the spectral radius of the symbol of the limit matrix can simply be bounded by
  \begin{align*}
    \rho\left(\widehat{\matr{T}(0)}(x)\right) \ge \left\lvert e^{-ix}\cdot 1\right\rvert = 1
  \end{align*}
  for all $x\in[-\pi,\pi]$, so that
  \begin{align*}
    \rho\left(\lim_{L\rightarrow\infty}\matr{T}(\mu)\right) =  \sup_{x\in[-\pi,\pi]}\rho\left(\widehat{\matr{T}(0)}(x)\right) \ge 1,
  \end{align*}
  which ends the proof.
\end{proof}
 
Therefore, PFASST itself does not converge in the limit of $L\rightarrow\infty$, if the time interval is fixed.
Also, for finite numbers of time-steps, the spectral radius of the iteration matrix is bounded by $1$, so that the spectral radius converges to $1$ from below, making PFASST slower and slower for increasing numbers of time-steps.

\subsection{Extending time interval}

In this scenario, the parameter $\mu$ does not change for $L\rightarrow\infty$.
Thus, applying the perturbation argument we frequently used in this work is not possible and fully algebraic bounds for the spectral radii of the iteration matrices do not seem possible.
However, we can make use of the results found in~\cite{doi:10.1002/nla.2110}, where a block-wise Fourier mode analysis is applied to reduce the computational effort required to compute eigenvalues and spectral radii numerically.

\bigskip

More precisely, there exist a transformation matrix $\mathcal{F}$, consisting of a permutation matrix for the Kronecker product as well as a Fourier matrix decomposing the spatial problem, such that for the smoother we have
\begin{align*}
  \matr{T}_{\mathrm{S}} = \mathcal{F}\mathrm{diag}\left(\matr{B}_1,...,\matr{B}_N\right)\mathcal{F}^{-1}
\end{align*}
for blocks
\begin{align*}
  \matr{B}_n = \matr{I}_{LM} - \left(\matr{I}_{LM} - \mu\lambda_n\matr{I}_L\otimes\matr{Q}_\Delta\right)^{-1}\left(\matr{I}_{LM} - \mu\lambda_n\matr{I}_L\otimes\matr{Q} - \matr{E}\otimes\matr{N}\right)\in\mathbb{R}^{LM\times LM}.
\end{align*}
This corresponds to the iteration matrix of the smoother for a single eigenvalue $\lambda_n(\matr{A}) = \lambda_n$ of the spatial matrix $\matr{A}$.
These blocks are block Toeplitz matrices themselves and their symbol is given by
\begin{align*}
  \widehat{\matr{B}_n}(x) &= \matr{I}_M - \left(\matr{I}_{M} - \mu\lambda_n\matr{Q}_\Delta\right)^{-1}\left(\matr{I}_{M} - \mu\lambda_n\matr{Q}\right) + e^{-ix}\matr{N} \\
  &= \left(\matr{I}_{M} - \mu\lambda_n\matr{Q}_\Delta\right)^{-1}\mu\lambda_n\left(\matr{Q}-\matr{Q}_\Delta\right) + e^{-ix}\matr{N}.
\end{align*}
Then, for $L\rightarrow\infty$ we again use \cite{BoltenRittich2017} and obtain
\begin{align*}
  \rho\left(\lim_{L\rightarrow\infty}\matr{T}_{\mathrm{S}}\right) = \max_{n=1,...,N} \rho\left(\lim_{L\rightarrow\infty}\matr{B}_n\right) = \max_{n=1,...,N} \sup_{x\in[-\pi,\pi]}\rho\left(\widehat{\matr{B}_n}(x)\right)
\end{align*}
However, although being only of size $M\times M$, it is unknown how to compute the spectral radius of these symbols for fixed $\mu$, so that numerical computation is required to find these values for a given problem.

\bigskip

Even worse, for the coarse-grid correction the blocks are at least of size $2LM\times 2LM$ due to mode mixing in space (and time, if coarsening in the nodes is applied) and they are not given by periodic stencils as for the smoother.
Thus, the results in \cite{BoltenRittich2017} cannot be applied. 
Clearly, the same is true for the full iteration matrix of PFASST and so neither the perturbation argument nor the analysis of the symbols provide conclusive bounds or limits for the spectral radius if $\mu$ is fixed.
We refer to~\cite{doi:10.1002/nla.2110,Moser2017PhD} for detailed numerical studies of these iteration matrices and their action on error vectors.

\section{Summary and discussion}\label{sec:summary}

While the parallel full approximation scheme in space and time has been used successfully with advanced space-parallel solvers on advanced HPC machines around the world, a solid mathematical analysis as well as a proof of convergence was still missing.
The algorithm in its original form is rather complex and not even straightforward to write down, posing a severe obstacle for any attempt to even formulate a conclusive theory.
Yet, with the formal equivalence to multigrid methods as shown in~\cite{doi:10.1002/nla.2110}, a mathematical framework now indeed exists which allows to use a broad range of established methods for the analysis of PFASST, at least for linear problems. 
While in~\cite{doi:10.1002/nla.2110} a detailed, semi-algebraic Fourier mode analysis revealed many interesting features and also limitations of PFASST, a rigorous convergence proof has not been given so far.
In the present paper, we used the iteration matrices of PFASST, its smoother and the coarse-grid correction to establish an asymptotic convergence theory for PFASST.
In three sections, we analyzed the convergence of PFASST for the non-stiff and the stiff limit as well as its behavior for increasing numbers of time-steps.
For small enough CFL numbers (or dispersion relations) $\mu$, we proved an upper limit for the spectral radius of PFASST's iteration matrix, which goes to zeros for smaller and smaller $\mu$ (see Theorem~\ref{th:conv_pfasst_to0}). 
In turn, if $\mu$ becomes large, we showed in Theorem~\ref{th:conv_pfasst_muinf} that the spectral radius of the iteration matrix is also bounded, but only when the parameters of the smoother are chosen appropriately. 
In this stiff limit, PFASST also satisfies the standard approximation property of linear multigrid methods, see Lemma~\ref{lem:approx_prop_muinf}, and despite the missing smoothing property, a weakened form of $\mu$-independent convergence was proven in Theorem~\ref{th:norm_bound_PFASST}.
However, in order to satisfy the smoothing property and to make PFASST a pure multigrid method, the smoother needs a damping parameter.
Then, using Reusken's Lemma, the smoothing property can be shown and fully $\mu$-independent convergence is achieved, see Theorem~\ref{th:norm_bound_PFASST_smoo}.
For all these results, we used a perturbation argument for the iteration matrices which allowed us to extract their limit matrices and to show convergence towards those.
Finally, we investigated PFASST for increasing numbers $L$ of time-steps and showed that PFASST does not converge in the limit case.
Even worse, convergence is expected to degenerate for $L$ larger and larger.
However, this applies only to a fixed time interval, i.e.\ to the case where $\mu\rightarrow0$ as $L\rightarrow\infty$. 
For an extending time interval, no analytic bound has been established.

\bigskip

The results presented here contain the first convergence proofs for PFASST for both non-stiff as well as stiff limit cases.
The lemmas and theorems of this paper are of theoretical nature and, frankly, quite technical.
Thus, the obvious question to ask is whether the results of this paper relate to actual computations with PFASST.
In order to provide first answers to this question, we used two standard test cases:

\begin{description}
  \item[\textbf{Test A}] 1D heat equation with $\nu>0$
  \begin{align*}
    u_t &= \nu\Delta u\quad\text{on } [0,1]\times[0,T],\\
    u(0,t) &= 0,\ u(1,t) = 0,\\
    u(x,0) &=\ \text{random}
  \end{align*}
  \item[\textbf{Test B}] 1D advection equation with $c>0$
  \begin{align*}
    u_t &= c\nabla u\quad\text{on } [0,1]\times[0,T],\\
    u(0,t) &= u(1,t)\\
    u(x,0) &= \sin(64\pi x)
  \end{align*}
\end{description}

For both cases and all runs we used $M=3$ Gau\ss-Radau nodes with the LU trick, $N=127$ for Test A and $N=128$ for Test B.
The time interval was fixed to $[0,T] = [0,1]$.
For both problems we used centered finite differences for the differential operators, yielding real negative eigenvalues for Test A and imaginary eigenvalues for Test B.
With Dirichlet boundary conditions in the diffusive case, the spatial matrix $\matr{A}$ is invertible and we can allow all modes to be present using a random initial guess.
In the advective case, however, $\matr{A}$ is not invertible, since two eigenvalues are zero. 
Thus, we did not use random initial data but a rather oscillatory initial sine wave.
For PFASST, we removed the initial prediction phase on the coarse level~\cite{EmmettMinion2012} and set all variables to zero at the beginning of each run, except for $u(x,0)$ (this corresponds to no spreading).
Coarsening was done in space only and we terminated the iteration when an absolute residual tolerance of $10^{-8}$ was met.
All results were generated using the \texttt{pySDC} framework and the codes can be found online, see~\cite{pySDC_asympc_conv}.

\bigskip

The first thing to analyze is whether PFASST indeed converges for the non-stiff as well as for the stiff limit.
To test this, we chose $\dt=0.25$ fixed and varied $\nu$ or $c$ such that the CFL number $\mu$ varied between $10^{-4}$ and $10^{11}$.
The results for Test A (``diffusion'') and Test B (``advection'') are shown in Figure~\ref{fig:conv_test_niter_NS3} for $k=M=3$ smoothing steps and in Figure~\ref{fig:conv_test_niter_NS2} for $k=2$ smoothing steps.
We observe how for both problems the numbers of iterations go down for $\mu$ small and $\mu$ large, at least if $k=M$.
As predicted by Theorem~\ref{th:conv_pfasst_muinf} and Lemma~\ref{lem:smoo_prop_muinf}, we need as many smoothing steps as there are quadrature nodes in order to see the number of iterations of PFASST to go down in the stiff limit while convergence in the non-stiff limit is not affected.
We also observe how the iteration counts increase for medium sizes of $\mu$ and peak at about $\mu\approx 10$, very much in line with the observations made in Figure~\ref{fig:smoother_specrad_full}.

\begin{figure}
  \centering
  \begin{subfigure}[b]{0.475\textwidth}
    \centering
    \includegraphics[width=0.95\textwidth]{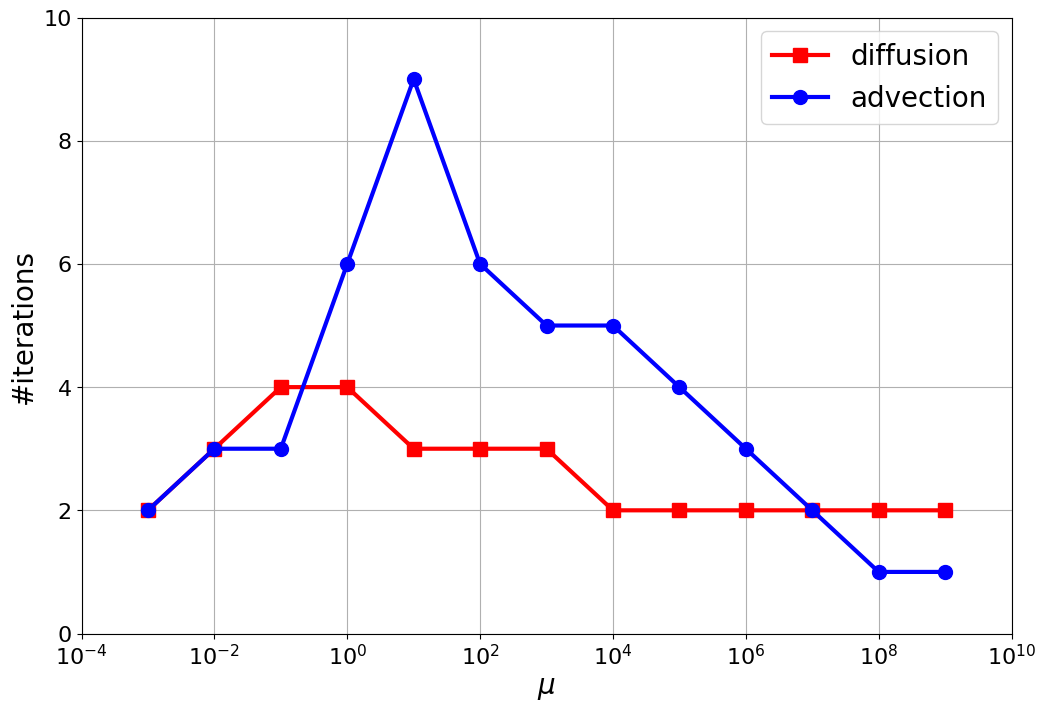}
    \caption{$3$ smoothing steps}
    \label{fig:conv_test_niter_NS3}
  \end{subfigure}
  \begin{subfigure}[b]{0.475\textwidth}
    \centering
    \includegraphics[width=0.95\textwidth]{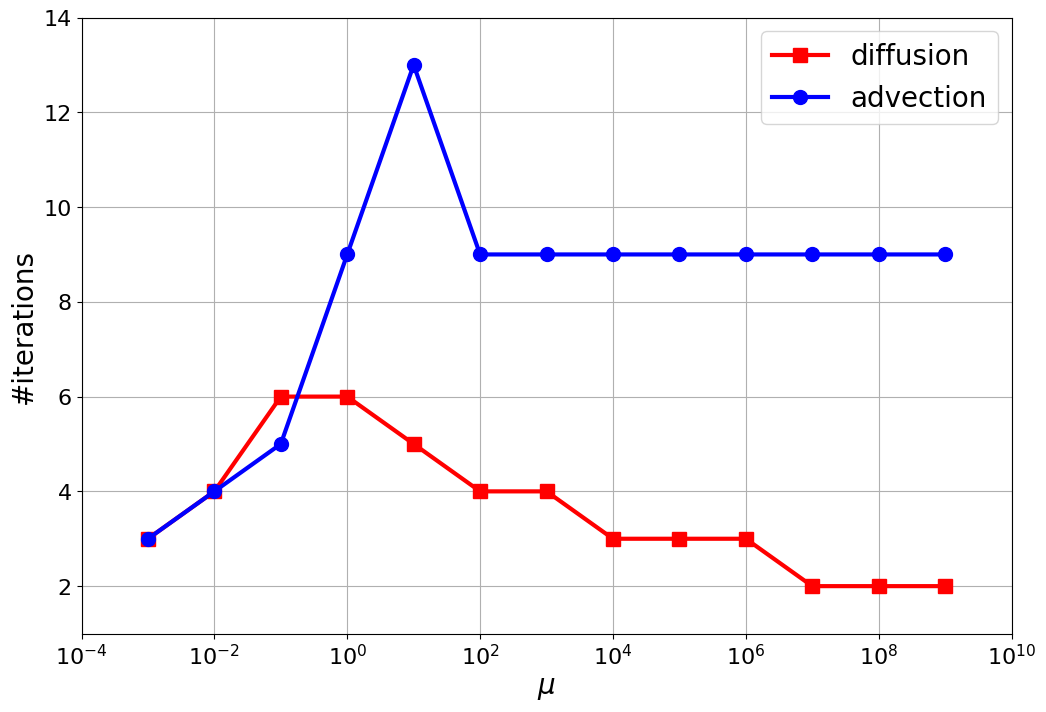}
    \caption{$2$ smoothing steps}
    \label{fig:conv_test_niter_NS2}
  \end{subfigure}
  \caption{Iteration counts for the diffusion (Test A) and advection (Test B) problem using varying parameters $\nu$ and $c$ while keeping the other parameters fixed. Left: $k=3=M$ smoothing steps, right: $k=2<M$ smoothing steps.}
  \label{fig:conv_test_niter}
\end{figure}

\bigskip

The second experiment concerns the behavior of PFASST for increasing numbers of time-steps.
For Figure~\ref{fig:conv_test_niter_Linf} we fixed $\nu=0.1$ and $c=0.1$ as well as all other parameters and used $3$ smoothing steps.
Then, we increased the number of time-steps for PFASST from $1$ to $4096$ and counted the number of iterations.
We did this for the standard, undamped smoother (``LU'') as well as for the damped smoother (``LU2'') proposed in Lemma~\ref{lem:smoo_prop_muinf_mod}, in Figure~\ref{fig:conv_test_niter_Linf_diffusion} for Test A and in Figure~\ref{fig:conv_test_niter_Linf_advection} for Test B.
This experiment shows three things: first, we see that indeed PFASST's convergence degenerates when more and more time-steps are considered, even if the time interval is fixed. 
Second, the advective case performs much worse than the diffusive case, which is to be expected from a generic time-parallel method.
Third, PFASST with damped smoothing has much worse convergence rates than the unmodified PFASST algorithm and despite being an actual multigrid solver, iteration counts increase for $L\rightarrow\infty$.
Although the increase in iteration counts is not as severe as for unmodified PFASST, there is no scenario where iteration counts are lower, making this a rather poor parallel-in-time integrator.

\begin{figure}
  \centering
  \begin{subfigure}[b]{0.475\textwidth}
    \centering
    \includegraphics[width=0.95\textwidth]{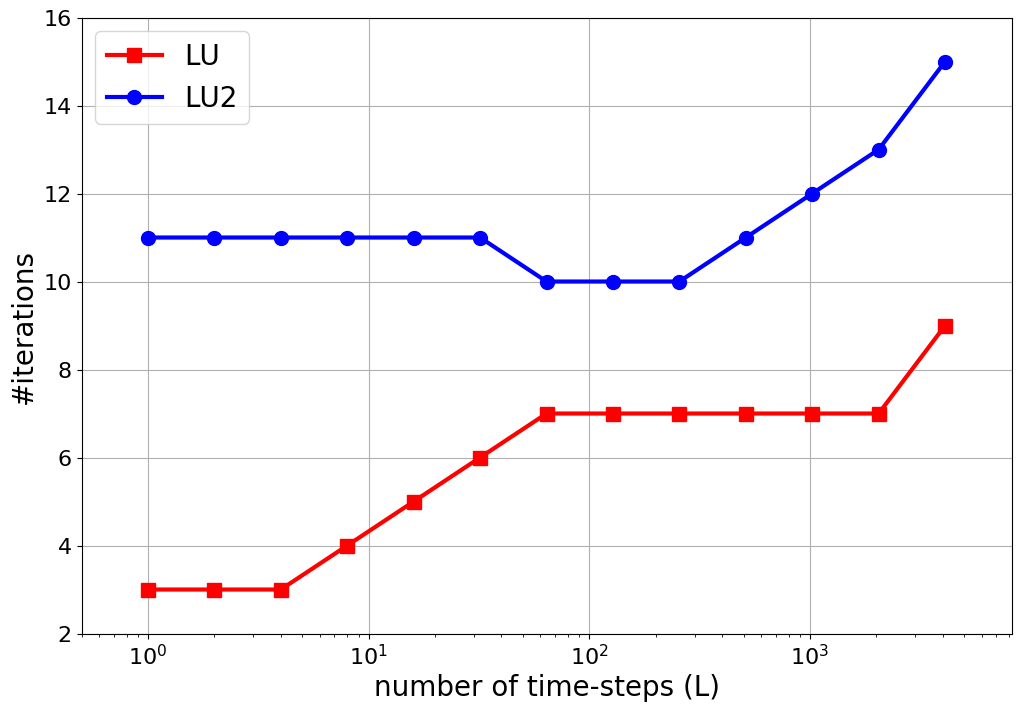}
    \caption{Diffusion}
    \label{fig:conv_test_niter_Linf_diffusion}
  \end{subfigure}
  \begin{subfigure}[b]{0.475\textwidth}
    \centering
    \includegraphics[width=0.95\textwidth]{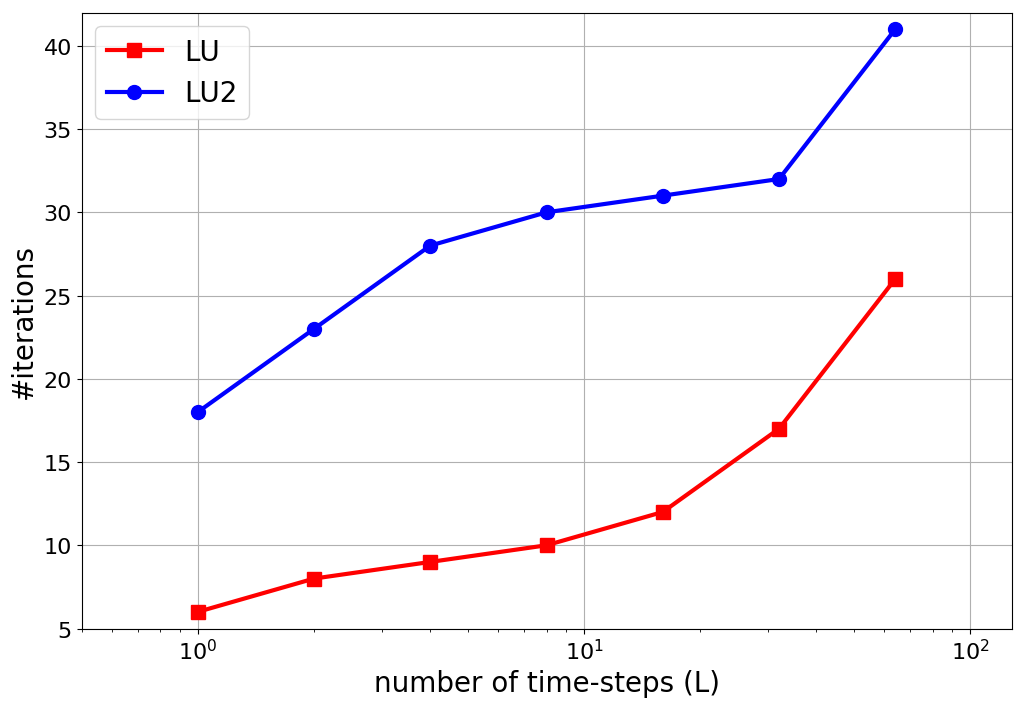}
    \caption{Advection}
    \label{fig:conv_test_niter_Linf_advection}
  \end{subfigure}
  \caption{Mean number of iterations over all time-steps for the diffusion (left) and advection (right) problem on a fixed time interval with increasing numbers of time-steps. PFASST with standard smoothing (``LU'') and damped smoothing (``LU2'') is shown.}
  \label{fig:conv_test_niter_Linf}
\end{figure}

\bigskip

In this paper, we presented the first convergence proofs for the parallel full approximation scheme in space and time, covering asymptotic cases for linear problems. 
We have seen that the results obtained here are well reflected in actual PFASST runs and can help to understand convergence behavior in realistic scenarios.
While this establishes a rather broad convergence theory, one key property of any parallel-in-time integration method cannot be investigated with this directly: parallel performance.
In the original papers on PFASST, first theoretical efficiency limits and expected speedups were already derived, but only for fixed numbers of iterations~\cite{Minion2010,EmmettMinion2012}.
Yet, the formulation of PFASST as a multigrid method now allows us not only to prove convergence but also to estimate precisely the expected parallel performance for linear problems.
We will report on this in a subsequent paper.

\bibliographystyle{wileyj} 
\bibliography{refs}

\end{document}